\tikzset{node distance=2.5cm, auto}
\author[Ganguly]{Jyotirmoy Ganguly}
\author[Spallone]{Steven Spallone}
\address{Indian Institute of Science Education and Research, Pune-411008, India}
\email{jyotirmay.ganguli@students.iiserpune.ac.in}
\address{Indian Institute of Science Education and Research, Pune-411008, India}
\email{sspallone@gmail.com}
\keywords{symmetric groups, representation theory, spinoriality, Stiefel-Whitney class, spin structure, alternating groups}
\subjclass[2010]{20C30, 57R20}
\newcommand{\nc}{\newcommand}
\nc{\dmo}{\DeclareMathOperator}
\nc{\mc}{\mathcal}
\nc{\mb}{\mathbb}
\nc{\mf}{\mathfrak}
\nc{\ul}{\underline}
\nc{\ol}{\overline}
\nc{\lip}{\langle}
 \nc{\rip}{\rangle}
\nc{\N}{\mb N}
\nc{\R}{\mb R}
\nc{\Z}{\mb Z}
\nc{\Q}{\mb Q}
\nc{\C}{\mb C}
\nc{\beq}{\begin{equation*}}
\nc{\eeq}{\end{equation*}}
\nc{\half}{\frac{1}{2}}
\nc{\la}{\lambda}
\nc{\eps}{\epsilon}
\DeclareMathOperator{\Spin}{Spin}
\DeclareMathOperator{\Pin}{Pin}
\DeclareMathOperator{\GL}{GL}
\dmo{\SL}{SL}
\dmo{\SO}{SO}
\dmo{\Or}{O}
\dmo{\sgn}{sgn}
\dmo{\Hom}{Hom}
\dmo{\odd}{odd}
\dmo{\Sym}{Sym}
\dmo{\im}{im}
\newtheorem{theorem}{Theorem}[section]
\newtheorem{lemma}[theorem]{Lemma}
\newtheorem{prop}[theorem]{Proposition}
\newtheorem{cor}[theorem]{Corollary}
\theoremstyle{definition}
\theoremstyle{remark}
\numberwithin{equation}{section}
  \dmo{\tr}{tr}
\title{Spinorial Representations of Symmetric   Groups}
\makeatletter\@addtoreset{chapter}{part}\makeatother%
\begin{document}

\maketitle

\begin{abstract}
 A real representation $\pi$ of a finite group  may be regarded as a homomorphism   to an orthogonal group $\Or(V)$.  For symmetric groups $S_n$, alternating groups $A_n$, and products $S_n \times S_{n'}$ of symmetric groups, we give criteria for whether $\pi$ lifts to the double cover $\Pin(V)$ of $\Or(V)$, in terms of character values.  From these criteria we compute the second Stiefel-Whitney classes of these representations. 
\end{abstract}

\tableofcontents

\section{Introduction}

\bigskip

A real representation $\pi$ of a finite group $G$ can be viewed as a group homomorphism from $G$ to the orthogonal group  $\Or(V)$ of a Euclidean space $V$.  Recall the double cover $\rho: \Pin(V) \to \Or(V)$.  We say that  $\pi$ is {\it spinorial}, provided it lifts to $\Pin(V)$, meaning there is a homomorphism $\hat \pi: G \to \Pin(V)$ so that $\rho \circ \hat \pi=\pi$.   

When the image of $\pi$ lands in $\SO(V)$, the representation is spinorial precisely when its second Stiefel-Whitney class $w_2(\pi)$ vanishes.  Equivalently, when the associated vector bundle over the classifying space  $BG$ has a spin structure.   (See Section 2.6 of \cite{Benson}, \cite{GKT}, and Theorem II.1.7 in \cite{Lawson}.)  Determining spinoriality of Galois representations also has applications in number theory:  see \cite{Serrespin}, \cite{Deligne}, and    \cite{DPDR}.

 In this paper we give lifting criteria for representations of the symmetric groups $S_n$, the alternating groups $A_n$, and a product $S_{n} \times S_{n'}$ of two symmetric groups.
Write $s_i \in S_n$ for the transposition $(i,i+1)$, in cycle notation. A key result of this paper is the following:

\begin{theorem} \label{char.spin.crit} Let $n \geq 4$.
\begin{enumerate}
\item A representation $\pi$ of $S_n$ is spinorial iff $ \chi_\pi(1) \equiv \chi_\pi(s_1s_3)  \mod 8$ and $\chi_\pi(1)-\chi_\pi(s_1)$
is congruent  to $0$ or $6 \mod 8$.  

\item  A representation $\pi$ of $A_n$ is spinorial iff  $\chi_\pi(1) \equiv \chi_\pi(s_1s_3)   \mod 8$. 
\end{enumerate}
\end{theorem}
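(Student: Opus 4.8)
The plan is to replace spinoriality by the vanishing of a single class in $H^2(S_n;\Z/2)$ (resp.\ $H^2(A_n;\Z/2)$) and then detect that class by restriction to one copy of $(\Z/2)^2$. Recall that the obstruction to lifting a real representation $\pi\colon G\to\Or(V)$ along $\rho\colon\Pin(V)\to\Or(V)$ is the $\pi$-pullback of the class in $H^2(B\Or(V);\Z/2)=\Z/2\langle w_1^2,w_2\rangle$ classifying the central extension $1\to\{\pm1\}\to\Pin(V)\to\Or(V)\to1$, and with the $\Pin$-convention in force this class is $w_2+w_1^2$; so $\pi$ is spinorial if and only if $o(\pi):=w_2(\pi)+w_1(\pi)^2$ vanishes in $H^2(G;\Z/2)$. (When $\im\pi\subseteq\SO(V)$ this recovers the criterion $w_2(\pi)=0$ quoted above. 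For $A_n$ with $n\ge4$ we have $\Hom(A_n,\Z/2)=0$, so $w_1(\pi)=0$ and $o(\pi)=w_2(\pi)$.)

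First I would pin down the target groups: for $n\ge4$ one has $H_1(S_n;\Z)=\Z/2=H_2(S_n;\Z)$, so universal coefficients give $H^2(S_n;\Z/2)\cong(\Z/2)^2$, while $H_1(A_n;\Z)\in\{0,\Z/3\}$ and $H_2(A_n;\Z)\in\{\Z/2,\Z/6\}$ give $H^2(A_n;\Z/2)\cong\Z/2$ (the $3$-torsion in the Schur multiplier of $A_6,A_7$ is invisible mod $2$). Next I would show that restriction to $E:=\langle s_1,s_3\rangle\cong(\Z/2)^2$ is injective on $H^2(S_n;\Z/2)$, and that restriction to the Klein four-group $E':=\langle s_1s_3,\,(13)(24)\rangle\le A_n$ is injective on $H^2(A_n;\Z/2)$. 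Because the targets are $(\Z/2)^2$ and $\Z/2$, it suffices to exhibit representations whose obstruction classes restrict to a spanning set of $H^2(E;\Z/2)$ (resp.\ a nonzero element of $H^2(E';\Z/2)$): on $E$ the class $o(\sgn\oplus\sgn)=w_1(\sgn)^2$ restricts to $x^2+y^2$, and the standard representation has $o$ restricting to $x^2+xy+y^2$ on $E$; on $E'$ the standard representation already gives $x^2+xy+y^2\ne0$. Both facts are instances of the computation in the next step.

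The remaining step is that computation. Decompose $\pi|_E=a\cdot\mathbf 1\oplus b\chi_1\oplus c\chi_2\oplus d\chi_3$ into the four characters of $E\cong(\Z/2)^2$, with $w_1(\chi_1)=x$, $w_1(\chi_2)=y$, $w_1(\chi_3)=x+y$ in $H^1(E;\Z/2)=\Z/2\langle x,y\rangle$; then the Whitney formula $w(\pi|_E)=(1+x)^b(1+y)^c(1+x+y)^d$ yields $w_1(\pi|_E)=(b+d)x+(c+d)y$ and, in $H^2(E;\Z/2)=\Z/2\langle x^2,xy,y^2\rangle$,
\[ w_2(\pi|_E)=\tbinom{b+d}{2}x^2+\tbinom{c+d}{2}y^2+(bc+bd+cd)\,xy . \]
Counting $\pm1$-eigenvalues identifies $b+d=\tfrac12(\chi_\pi(1)-\chi_\pi(s_1))$, $c+d=\tfrac12(\chi_\pi(1)-\chi_\pi(s_3))$, and $b+c=\tfrac12(\chi_\pi(1)-\chi_\pi(s_1s_3))$; since $s_1$ and $s_3$ are conjugate in $S_n$, $\chi_\pi(s_1)=\chi_\pi(s_3)$ forces $b=c$, whence $o(\pi)|_E=\binom{b+d+1}{2}(x^2+y^2)+b\,xy$. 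This vanishes iff $b+d\equiv0$ or $3\pmod4$ and $b\equiv0\pmod2$, i.e.\ iff $\chi_\pi(1)-\chi_\pi(s_1)\equiv0$ or $6\pmod8$ and $\chi_\pi(1)\equiv\chi_\pi(s_1s_3)\pmod8$, proving (1). For (2), the same computation on $E'$ — where $s_1s_3,(13)(24),(14)(23)$ are mutually conjugate in $A_n$, forcing $b=c=d$ — gives $o(\pi)|_{E'}=m(x^2+xy+y^2)$ with $m=\tfrac14(\chi_\pi(1)-\chi_\pi(s_1s_3))$, which vanishes iff $\chi_\pi(1)\equiv\chi_\pi(s_1s_3)\pmod8$.

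I expect the main obstacles to be, first, fixing the $\Pin$-convention so the obstruction is exactly $w_2+w_1^2$ rather than $w_2$ (the difference turns ``$0$ or $6$'' into ``$0$ or $2$''); and second, the detection claim that $E$ and $E'$ see all of $H^2$, which rests on the low-degree homology of $S_n$ and $A_n$ together with the test-representation argument above, and where the exceptional multipliers of $A_6$ and $A_7$ must be checked not to interfere. Everything else is the bookkeeping of eigenvalue counts and mod-$8$ reductions in the last step.
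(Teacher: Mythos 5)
Your proof is correct, but it takes a genuinely different route from the paper's. The paper proves part (1) in Section~\ref{key.section.sn} entirely via the Coxeter presentation of $S_n$: it lifts each $\pi(s_i)$ to elements $\pm c_i\in\Pin(V)$, shows the relations $c_i^2=1$ and $(c_ic_{i+1})^3=1$ hold iff $g_\pi\equiv 0,3\bmod 4$, and that $(c_ic_k)^2=1$ for $|i-k|>1$ holds iff $h_\pi\equiv 0\bmod 4$; part (2) is handled analogously in Section~\ref{altgrp} from a presentation of $A_n$. By contrast, you reduce spinoriality to the vanishing of $o(\pi)=w_2(\pi)+w_1(\pi)^2$ and then detect $H^2(S_n;\Z/2)$ by restriction to the single Klein four-group $E=\langle s_1,s_3\rangle$ (and $H^2(A_n;\Z/2)$ by restriction to $E'=\langle s_1s_3,(13)(24)\rangle$), finishing with a Whitney-formula computation on $(\Z/2)^2$. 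Two points worth flagging about the relationship between the arguments. First, the identity ``$\pi$ spinorial iff $w_2(\pi)+w_1(\pi)^2=0$'' is exactly the paper's Proposition~\ref{spin.crit.chiral}, which the paper proves (via the auxiliary $\pi\oplus\det\pi$ lemma) rather than cites; you treat it as known, which is defensible since it is standard for the $\Pin^-$-convention the paper uses ($v^2=-1$ for unit vectors), and your worry about the convention is resolved by checking it against the rank-one computation in Section~\ref{C2}. Second, the paper's deduction order is the reverse of yours: it proves Theorem~\ref{char.spin.crit} by generators-and-relations first, and only afterwards (Section~\ref{SW.section}) uses that theorem together with the basis $\{e_{\mathrm{cup}},w_2(\pi_n)\}$ of $H^2(S_n;\Z/2)$ to compute Stiefel--Whitney classes; your argument inverts this, getting detection first and the lifting criterion as a corollary. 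What each approach buys: the paper's is elementary and self-contained (no input on $H_2(S_n)$ or $H_2(A_n)$ is needed), while yours is more conceptual, replaces the two cyclic detecting subgroups $\langle s_1\rangle,\langle s_1s_3\rangle$ of Corollary~\ref{rest.spin.prop} by a single $(\Z/2)^2$, and simultaneously yields the $w_2$ formula of Theorem~\ref{seven.point.two} with no extra work. Your bookkeeping checks out: with $\pi|_E=a\cdot\mathbf 1\oplus b\chi_1\oplus c\chi_2\oplus d\chi_3$ and $b=c$ forced by conjugacy of $s_1,s_3$, one indeed gets $o(\pi)|_E=\binom{b+d+1}{2}(x^2+y^2)+b\,xy$ with $b+d=g_\pi$ and $2b=h_\pi$, reproducing the stated congruences; and on $E'$ the relation $b=c=d$ gives $o(\pi)|_{E'}=b(x^2+xy+y^2)$ with $b=h_\pi/2$, matching the $A_n$ criterion.
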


Combining this with the main result of \cite{GPS} on character values, one deduces that as $n \to \infty$, ``$100 \%$" irreducible representations of $S_n$ are spinorial.  (See Corollary \ref{percent}.) 

Next, we leverage this result to compute the second Stiefel-Whitney classes for (real) representations $\pi$ of $S_n$:
\begin{equation} \label{w2.is.here}
w_2(\pi)  =\left[\frac{\chi_\pi(1)-\chi_\pi(s_1)}{4}\right]e_{\mathrm{cup}}+\frac{\chi_{\pi}(1)-\chi_\pi(s_1s_3)}{4}w_2(\pi_n),
\end{equation}
where $\pi_n$ is the standard representation of $S_n$, and $e_{\mathrm{cup}} \in H^2(G,\Z/2\Z)$ is a certain cup product.

This formula allows us to compute the second Stiefel-Whitney classes of representations of   $S_n \times S_{n'}$ through K\"{u}nneth theory, and therefore to identify spinorial representations of this product.  To state the result, let $\Pi=\pi \boxtimes \pi'$ be the external tensor product of representations $\pi$ of $S_n$ and $\pi'$ of $S_{n'}$.     Let $g=\half(\chi_\pi(1)-\chi_\pi(s_1))$, the multiplicity of $-1$ as an eigenvalue of $\pi(s_1)$ and similarly write $g'$ for the corresponding quantity for $\pi'$.

  \begin{theorem} \label{prod.thm.intro} The representation $\Pi$ of $S_n \times S_{n'}$ is spinorial iff the restrictions of $\Pi$ to $S_n \times \{ 1 \}$ and $\{ 1 \} \times S_{n'}$ are spinorial, and  \beq
   (\deg \Pi + 1)gg' \equiv 0 \mod 2.
  \eeq
  \end{theorem}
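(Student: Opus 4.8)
The plan is to compute the obstruction to spinoriality of $\Pi$ in $H^2(S_n\times S_{n'};\Z/2\Z)$ by the K\"unneth theorem and to read the three stated conditions off its three components. Recall that $\Pi$ is spinorial iff a single class $\omega(\Pi)\in H^2(S_n\times S_{n'};\Z/2\Z)$ vanishes, this class being natural in the representation: it is the pullback along $\Pi$ of the universal class $u\in H^2(B\Or(V);\Z/2\Z)$ classifying the fixed central extension $\Z/2\Z\to\Pin(V)\to\Or(V)$. Since $H^2(B\Or(V);\Z/2\Z)$ is spanned by $w_1^2$ and $w_2$ while the restriction of $u$ to $B\SO(V)$ is $w_2$, we have $\omega(\pi)=w_2(\pi)$ or $\omega(\pi)=w_2(\pi)+w_1(\pi)^2$, uniformly in $\pi$, according to the choice of $\Pin(V)$. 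As $\Z/2\Z$ is a field and $B(S_n\times S_{n'})=BS_n\times BS_{n'}$, K\"unneth gives
\[
H^2(S_n\times S_{n'};\Z/2\Z)=H^2(S_n;\Z/2\Z)\oplus\bigl(H^1(S_n;\Z/2\Z)\otimes H^1(S_{n'};\Z/2\Z)\bigr)\oplus H^2(S_{n'};\Z/2\Z),
\]
where the two outer summands are the images of restriction to $S_n\times\{1\}$ and $\{1\}\times S_{n'}$ respectively, and the middle summand is generated by the nonzero element $\sigma_n\otimes\sigma_{n'}$, with $\sigma_m\in H^1(S_m;\Z/2\Z)=\Z/2\Z$ the class of the sign character (we may assume $n,n'\geq 2$).

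Next I compute the three K\"unneth components of $\omega(\Pi)$. Over $BS_n\times BS_{n'}$ the vector bundle of $\Pi$ is $p_1^*E_\pi\otimes p_2^*E_{\pi'}$, so the tensor-product Whitney formula (via the splitting principle) gives, with $d=\deg\pi$ and $d'=\deg\pi'$,
\[
w_2(\Pi)=d'\,p_1^*w_2(\pi)+d\,p_2^*w_2(\pi')+\tbinom{d'}{2}p_1^*w_1(\pi)^2+\tbinom{d}{2}p_2^*w_1(\pi')^2+(dd'+1)\bigl(p_1^*w_1(\pi)\bigr)\bigl(p_2^*w_1(\pi')\bigr)
\]
modulo $2$, while $w_1(\Pi)=d'\,p_1^*w_1(\pi)+d\,p_2^*w_1(\pi')$, so that $w_1(\Pi)^2$ has zero component in $H^1\otimes H^1$. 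Hence the component of $\omega(\Pi)$ in $H^2(S_n;\Z/2\Z)$ is $\omega(\Pi|_{S_n\times\{1\}})$ (it is the restriction of $\omega(\Pi)$ to $S_n\times\{1\}$, and $\omega$ is natural), the component in $H^2(S_{n'};\Z/2\Z)$ is $\omega(\Pi|_{\{1\}\times S_{n'}})$, and the component in the middle summand is $(dd'+1)\,w_1(\pi)\otimes w_1(\pi')$, since among all the displayed terms only the cross term of $w_2(\Pi)$---and no term of $w_1(\Pi)^2$---lies there.

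It follows that $\Pi$ is spinorial iff all three components vanish. The first two say precisely that $\Pi|_{S_n\times\{1\}}$ and $\Pi|_{\{1\}\times S_{n'}}$ are spinorial. For the third, $\det\pi$ equals the sign character iff $\det\pi(s_1)=-1$, i.e.\ iff $g=\half(\chi_\pi(1)-\chi_\pi(s_1))$ is odd; hence $w_1(\pi)=\sigma_n$ when $g$ is odd and $w_1(\pi)=0$ when $g$ is even, and likewise for $\pi'$. Therefore $w_1(\pi)\otimes w_1(\pi')=\sigma_n\otimes\sigma_{n'}$ when $gg'$ is odd and $=0$ when $gg'$ is even, so the middle component vanishes iff $(dd'+1)gg'\equiv 0\pmod 2$; as $dd'=\deg\Pi$, this is the asserted congruence.

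The step I expect to be the main obstacle is pinning down the middle K\"unneth component exactly. One must run the tensor-product Whitney formula mod $2$ carefully enough to be sure that the coefficient of $w_1(\pi)\otimes w_1(\pi')$ in $w_2(\Pi)$ is precisely $dd'+1$---so that it is nonzero exactly when $\deg\Pi$ is even---and one must check that the $w_1^2$-correction distinguishing the two choices of $\Pin(V)$ contributes nothing to this component, which is why it pays to isolate $w_1(\Pi)^2$ and note that it is diagonal for the K\"unneth splitting, making the argument independent of that choice. The remaining ingredients---naturality of the obstruction class, and the identification of $w_1(\pi)$ with the parity of $g$ via the sign character---are routine.
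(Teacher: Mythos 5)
Your proof is correct and follows the paper's route: decompose the degree-two obstruction to lifting via K\"unneth for $H^2(S_n\times S_{n'};\Z/2\Z)$ and evaluate it using the splitting-principle Whitney formula for a tensor product, reading off the three K\"unneth components. Your one streamlining is to invoke naturality of the obstruction class under restriction to identify the two outer K\"unneth components directly with the obstructions of $\Pi|_{S_n\times\{1\}}$ and $\Pi|_{\{1\}\times S_{n'}}$, and to note that $w_1(\Pi)^2$ is diagonal for the K\"unneth splitting so the middle term is unaffected by the $w_1^2$ correction; the paper instead substitutes its explicit $w_2$ formula for $S_n$ (Theorem~\ref{seven.point.two}), obtains five congruences, and then checks by hand that four of them encode exactly those restriction conditions.
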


We now describe the layout of this paper.  Section \ref{preliminary.section} reviews the group $\Pin(V)$ and other conventions.  The $S_n$ case of Theorem \ref{char.spin.crit} is proven in Section \ref{key.section.sn} by means of defining relations for the $s_i$.  Additionally   we note corollaries of Theorem \ref{char.spin.crit}: primarily  the aforementioned ``$100 \%$" result,  a connection with skew Young tableau numbers, and the important case of  permutation modules, meaning the induction of the trivial character from a Young subgroup of $S_n$.   In particular we demonstrate that the regular representation of $S_n$ is spinorial for $n \geq 4$.
 
 Representations of the alternating groups are treated in Section \ref{altgrp}, again via generators and relations.   The main result is the $A_n$ case of Theorem \ref{char.spin.crit}. We enumerate the spinorial irreducible representations of $A_n$ in Theorem  \ref{irred.spin.an}.  Data for spinoriality of irreducible representations of $S_n$ and $A_n$ for small $n$ is presented in Tables 1 and 2 of Section \ref{table.section}.

In Section \ref{SW.section} we review the axioms of Stiefel-Whitney classes of real representations, and then deduce the Stiefel-Whitney class of a real representation of $S_n$.  In Section  \ref{products.section} we apply K\"{u}nneth theory to this formula to compute Stiefel-Whitney classes for real representations of $S_n \times S_{n'}$.  From this it is straightforward to deduce Theorem \ref{prod.thm.intro}.

 \section*{Acknowledgements}
 The authors would like to thank Dipendra Prasad, Rohit Joshi, and Amritanshu Prasad for helpful conversations.  The research by the first author for this article was supported by a PhD scholarship from the Indian National Board for Higher Mathematics.

  \section{Notation and Preliminaries} \label{preliminary.section}
  
  \subsection{Representations}
  All representations are on finite-dimensional vector spaces, which are always real, except in Section  \ref{altgrp}, where they may be specified as complex.   For a representation $(\pi,V)$  of a group $G$, write `$\det \pi$' for the composition $\det \circ \pi$; it is a linear character of $G$.  Also write `$\chi_\pi$' for the character of $\pi$.  If $H \leq G$ is a subgroup, write $\pi|_H$ for the restriction of $\pi$ to $H$.    A real representation $\pi: G \to \GL(V)$ can be conjugated to have image in $\Or(V)$, so we will assume that this is the case.
  When $\det \pi$ is trivial, it maps to $\SO(V)$, and the spinoriality question is whether it lifts to the double cover $\Spin(V)$ (which we review in the next section).

 Let $\sgn: S_n \to \{ \pm 1 \}$ be the usual sign character.  For $G=S_n$, we say that $\pi$ is \emph{chiral} provided $\det \pi=\sgn$ and $\pi$ is \emph{achiral} provided $\det \pi=1$.      
   Write $\pi_n: S_n \to \GL_n(\R)$ for the standard representation of $S_n$ by permutation matrices.

\subsection{The Pin Group}
We essentially review \cite[Chapter $1.6$]{BrokerDieck} for defining the groups $\Spin(V)$ and $\Pin(V)$, where $V$ is a Euclidean (i.e., a normed finite-dimensional real vector) space.
  The Clifford algebra $C(V)$ is the quotient of the tensor algebra $T(V)$ by the two-sided ideal generated by the set
\beq
 \{ v\otimes v +|v|^2 : v\in V\}.
 \eeq
 Write $C(V)^\times$ for its group of units.

We identify $V$ as a subspace of $C(V)$ through the natural injection $i: V \to C(V)$.
  Write $\alpha$ for the unique involution of the $\R$-algebra $C(V)$ with the property that $\alpha(x)=-x$ for $x \in V$. 
  One has
  \beq
  C(V) = C(V)^0 \oplus C(V)^1,
  \eeq
  where $C(V)^0$ is the $1$-eigenspace of $\alpha$ and $C(V)^1$ is the $-1$-eigenspace.
  
   Write $t$ for the unique anti-involution of $C(V)$ with $t(x)=x$.  For $x \in C(V)$, define $\ol x=t(\alpha(x))$; it is again an algebra anti-involution.  Define
 \beq
 N: C(V) \to C(V)
 \eeq
 by $N(x)= x \ol x$.
 Put
 \beq
 \Gamma_V=\{ x \in C(V)^\times \mid \alpha(x) V x^{-1}= V   \}.
 \eeq
Let  $\rho: \Gamma_V \to \GL(V)$ be the homomorphism   given by $v \mapsto \alpha(x) v x^{-1}$.  
 We will repeatedly use the fact that if $v$ is a unit vector, then $\rho(v)$ is the reflection determined by $v$.  Write `$\Pin(V)$' for the kernel of the restriction of $N$ to $\Gamma_V$.  The restriction of $\rho$ to $\Pin(V)$ is a double cover of $\Or(V)$ with kernel $\{ \pm 1 \}$.
The preimage of $\SO(V)$ under $\rho$ is denoted `$\Spin(V)$'.  Alternately, $\Spin(V)=\Pin(V) \cap C(V)^0$.
 
  \section{Symmetric Groups} \label{key.section.sn}
 
 \subsection{Lifting Criteria} \label{lifting.criteria.subs}
 Let $n \geq 2$.  The group $S_n$ is generated by the transpositions $s_i=(i,i+1)$ for $1 \leq i \leq n-1$,
with the following relations:

\begin{enumerate}
 \item 
     ${s_i}^2=1,\quad 1\leq i\leq n-1$,
 \item
     $s_is_k=s_k s_i, \text{ when } \quad |i-k|>1$,
      \item
     ${(s_is_{i+1})}^3=1,\quad 1\leq i\leq n-2$.

\end{enumerate}

 Therefore, defining a homomorphism from $S_n$ to a group $G$ is equivalent to choosing elements $x_1, \ldots, x_{n-1} \in G$ satisfying the same relations.  Let us call the relation $x_i^2=1$  the ``first lifting condition", the relation $x_ix_k=x_k x_i$ the ``second lifting condition", and
  $(x_i x_{i+1})^3=1$  the ``third lifting condition".  Note that this second condition is vacuous for $n < 4$.
 
 Let $\pi: S_n \to \Or(V)$ be a representation of degree $d$.  For each $\pi(s_i) \in \Or(V)$ there are $ \pm c_i \in \Pin(V)$ with $\rho(\pm c_i)=\pi(s_i)$, and the question is whether we may choose signs so that the $x_i=\pm c_i$ satisfy these lifting conditions.

 Let $g_\pi=\dfrac{\chi_\pi(1)-\chi_\pi(s_1)}{2}$, as in \cite{APS}.  This is the multiplicity of the eigenvalue $-1$ of $\pi(s_1)$, and the eigenvalue $1$ occurs with multiplicity $d-g_\pi$.  
 Put  $c_i=u_1 \cdots u_{g_\pi} \in \Pin(V)$, where $u_1, \ldots, u_{g_\pi}$ is an orthonormal basis of the $-1$-eigenspace of $\pi(s_i)$.
  Since $\pi(s_i)$ is the product of the reflections in each $u_j$, the elements $c_i$ and $-c_i$ are the lifts of $\pi(s_i)$. One computes that 
 \beq
 c_i^2=(-c_i)^2=(-1)^{\half g_\pi(g_\pi +1)},
 \eeq
 and therefore the first lifting condition is satisfied iff $g_\pi$ is congruent to $0$ or $3$ modulo $4$.  It does not matter for this whether we choose $c_i$ or $-c_i$.  
 
Consider the sequence $(c_1c_2)^3, (c_2 c_3)^3, \ldots \in \Pin(V)$.  Since each $( \pi(s_i) \pi(s_{i+1}))^3=1$, this must be a sequence of $\pm 1$'s.   For the third lifting condition these must each be $1$.  Thus $c_1$ may take either sign, but then the signs for $c_2, c_3, \ldots$ are determined.  Moreover this does not affect the first lifting condition.  Thus:

\begin{prop} The first and third lifting conditions hold iff $g_\pi \equiv 0$ or $3 \mod 4$. 
\end{prop}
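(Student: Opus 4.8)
The plan is to assemble the computations made just above the statement and then to pin down the one point that still needs attention: that the third lifting condition can always be met by a suitable choice of signs, whatever $g_\pi$ is. First I would record that $g_\pi$ does not depend on $i$: all transpositions are conjugate in $S_n$, so the operators $\pi(s_i)$ are conjugate in $\Or(V)$ and share the same multiplicity $g_\pi$ of the eigenvalue $-1$. Hence $c_i^2 = (-c_i)^2 = (-1)^{\half g_\pi(g_\pi+1)}$ holds uniformly in $i$, and this equals $1$ exactly when $\half g_\pi(g_\pi+1)$ is even, i.e.\ when $g_\pi \equiv 0$ or $3 \bmod 4$. This immediately gives one direction: if some choice of lifts $x_i = \pm c_i$ satisfies the first lifting condition, then $c_i^2 = x_i^2 = 1$ (squaring erases the sign), so $g_\pi \equiv 0$ or $3 \bmod 4$.

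For the converse I would exploit that $-1$ is central in $\Pin(V)$. We already know $(c_i c_{i+1})^3 \in \{ \pm 1\}$ for $1 \le i \le n-2$, since $\rho$ annihilates each of these elements. Replacing $c_{i+1}$ by $-c_{i+1}$ multiplies $(c_i c_{i+1})^3$ by $(-1)^3 = -1$ and leaves $(c_j c_{j+1})^3$ unchanged for every $j < i$, because those expressions involve only $c_1, \dots, c_i$. So I can fix signs greedily: choose the sign of $c_1$ arbitrarily, and then for $i = 1, 2, \dots, n-2$ in turn choose the sign of $c_{i+1}$ so that $(c_i c_{i+1})^3 = 1$. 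Since the only relations among the $s_i$ entering here link consecutive indices — there is no cyclic relation — no consistency condition arises, and the procedure terminates with all $n-2$ third relations satisfied simultaneously. None of these sign flips changes any $c_i^2$, so when $g_\pi \equiv 0$ or $3 \bmod 4$ the first lifting condition continues to hold for the adjusted $c_i$; combining the two directions yields the proposition.

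The content here is light, and essentially everything except the greedy argument has already been carried out in the preceding discussion. The one place requiring care is the bookkeeping: spelling out that flipping $c_{i+1}$ perturbs only the third relations indexed by $i$ and $i+1$, so that sweeping the indices from left to right really does leave every third relation satisfied at the end. I expect that to be the main — and quite mild — obstacle.
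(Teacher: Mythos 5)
Your argument is correct and is essentially the one in the paper: the paper likewise computes $c_i^2 = (-c_i)^2 = (-1)^{\frac{1}{2}g_\pi(g_\pi+1)}$ to settle the first lifting condition independently of the sign choices, and then observes that the sign of $c_1$ may be fixed arbitrarily, after which the signs of $c_2, c_3, \ldots$ are determined one at a time to force each $(c_i c_{i+1})^3 = 1$. You merely spell out a bit more explicitly the bookkeeping behind the greedy sweep — that flipping $c_{i+1}$ perturbs only the relation at index $i$ among those already fixed — and the standing fact that $g_\pi$ is independent of $i$ because the $\pi(s_i)$ are conjugate; both are implicit in the paper.
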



Now let $|i-k|>1$, and suppose as above that $c_i^2=1=c_k^2=1$.  Then the second lifting condition holds iff $(c_ic_k)^2=1$.  By conjugating we may assume that $i=1$ and $k=3$.  So put $h_\pi=\dfrac{\chi_\pi(1)-\chi_\pi(s_1s_3)}{2}$; as above the condition is equivalent to $h_\pi \equiv 0,3 \mod 4$.  However:

\begin{lemma} The integer $h_\pi$ is even.
\end{lemma}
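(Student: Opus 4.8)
The plan is to analyze the $-1$-eigenspace of $\pi(s_1 s_3)$ by simultaneously diagonalizing the commuting involutions $\pi(s_1)$ and $\pi(s_3)$. Since $s_1 s_3 = s_3 s_1$ in $S_n$, the operators $\pi(s_1)$ and $\pi(s_3)$ are commuting orthogonal involutions, hence simultaneously diagonalizable with eigenvalues $\pm 1$. Write $V_{\epsilon_1,\epsilon_3}$ for the simultaneous eigenspace on which $\pi(s_1)$ acts by $\epsilon_1$ and $\pi(s_3)$ acts by $\epsilon_3$, so that $V = \bigoplus_{\epsilon_1,\epsilon_3 \in \{\pm 1\}} V_{\epsilon_1,\epsilon_3}$. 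On $V_{\epsilon_1,\epsilon_3}$ the element $\pi(s_1 s_3)$ acts by the scalar $\epsilon_1\epsilon_3$, so its $-1$-eigenspace is exactly $V_{+,-} \oplus V_{-,+}$, and $h_\pi = \dim V_{+,-} + \dim V_{-,+}$.

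It then suffices to show $\dim V_{+,-} = \dim V_{-,+}$. I would do this by exhibiting an element $g \in S_n$ with $g s_1 g^{-1} = s_3$ and $g s_3 g^{-1} = s_1$; the element $g = (1,3)(2,4)$ works (here $n \geq 4$, consistent with the second lifting condition being non-vacuous). Conjugation by $\pi(g)$ then interchanges $\pi(s_1)$ and $\pi(s_3)$, so it carries $V_{+,-}$ isomorphically onto $V_{-,+}$; indeed for $v \in V_{+,-}$ one has $\pi(s_1)\pi(g)v = \pi(g)\pi(s_3)v = -\pi(g)v$ and $\pi(s_3)\pi(g)v = \pi(g)\pi(s_1)v = \pi(g)v$. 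Hence $\dim V_{+,-} = \dim V_{-,+}$ and $h_\pi = 2\dim V_{+,-}$ is even.

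Alternatively, and avoiding $g$ explicitly, one can read off the dimensions from characters: the projector onto $V_{\epsilon_1,\epsilon_3}$ is $\tfrac14(1 + \epsilon_1\pi(s_1))(1 + \epsilon_3\pi(s_3))$, so $\dim V_{\epsilon_1,\epsilon_3} = \tfrac14\bigl(\chi_\pi(1) + \epsilon_1\chi_\pi(s_1) + \epsilon_3\chi_\pi(s_3) + \epsilon_1\epsilon_3\chi_\pi(s_1 s_3)\bigr)$. Since $s_1$ and $s_3$ are conjugate in $S_n$ we have $\chi_\pi(s_1) = \chi_\pi(s_3)$, whence $\dim V_{+,-} = \dim V_{-,+} = \tfrac14\bigl(\chi_\pi(1) - \chi_\pi(s_1 s_3)\bigr) = \tfrac12 h_\pi$, again forcing $h_\pi$ even.

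There is no serious obstacle here: the only points that require a word of justification are the simultaneous diagonalizability of the two commuting real orthogonal involutions $\pi(s_1)$ and $\pi(s_3)$, and the identity $\chi_\pi(s_1) = \chi_\pi(s_3)$ (equivalently, the existence of the conjugating element $g$), both of which are immediate. I would present the first (conjugation) argument as the main proof, since it is the more robust and generalizes transparently.
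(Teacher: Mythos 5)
Your proof is correct, but it takes a genuinely different route from the paper. The paper's argument is shorter and uses a clever observation about complex eigenvalues of a $4$-cycle: choosing a $4$-cycle $\zeta_4$ with $\zeta_4^2$ conjugate to $s_1s_3$, the $-1$-eigenspace of $\pi(s_1s_3)$ has the same dimension as the sum of the $i$- and $(-i)$-eigenspaces of $\pi(\zeta_4)$; since $\pi$ is real, the latter two have the same multiplicity $m$, giving $h_\pi = 2m$. Your argument instead decomposes $V$ under the pair of commuting involutions $\pi(s_1)$ and $\pi(s_3)$ and observes, via the conjugating element $g=(1,3)(2,4)$ (or equivalently via the identity $\chi_\pi(s_1)=\chi_\pi(s_3)$ in the projector computation), that $\dim V_{+,-}=\dim V_{-,+}$, so $h_\pi = 2\dim V_{+,-}$. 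Both are short and rigorous; the paper's proof relies on the reality of the representation (conjugate eigenvalues pair up), while yours relies only on the fact that $s_1$ and $s_3$ are interchanged by an inner automorphism of $S_n$. Your argument is somewhat more elementary and self-contained, and your explicit construction of the intertwiner $\pi(g)\colon V_{+,-}\to V_{-,+}$ is exactly the natural structural explanation of the parity. One minor remark: your character-theoretic variant, giving $\dim V_{\pm,\mp} = \tfrac14(\chi_\pi(1)-\chi_\pi(s_1s_3))$ directly, is perhaps the cleanest of the three.
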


\begin{proof} Let $\zeta_4$ be a $4$-cycle in $S_n$.  Then $\zeta_4^2$ is conjugate to $s_1s_3$.  Let $m$ be the multiplicity of $i=\sqrt{-1}$ as an eigenvalue of $\pi(\zeta_4)$.  Then $h_\pi$, the  multiplicity of $-1$ as an eigenvalue of $\pi(s_1s_3)$, is $2m$.
\end{proof}

Therefore:

\begin{prop} The second lifting condition holds iff $h_\pi$ is a multiple of $4$.
\end{prop}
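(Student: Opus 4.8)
The plan is to combine the previous reduction with the lemma just proved. Recall the setup: for transpositions $s_i$ and $s_k$ with $|i-k|>1$, we have already chosen lifts $c_i,c_k\in\Pin(V)$, and under the standing hypothesis that the first and third lifting conditions hold (so $c_i^2 = c_k^2 = 1$), the second lifting condition $c_ic_k = c_kc_i$ is equivalent to $(c_ic_k)^2 = 1$. By conjugating in $S_n$ we may take $i=1$, $k=3$, so that $\pi(s_1s_3) = \pi(s_1)\pi(s_3) = \rho(c_1c_3)$, and $c_1c_3$ is one of the two lifts of $\pi(s_1s_3)$.

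First I would invoke the same eigenvalue computation already used for $c_i^2$: since $c_1$ and $c_3$ commute up to sign and each squares to $1$, the element $c_1c_3$ is (up to sign) the product $u_1\cdots u_{h_\pi}$ of an orthonormal basis of the $-1$-eigenspace of $\pi(s_1s_3)$, which has dimension $h_\pi = \frac{\chi_\pi(1)-\chi_\pi(s_1s_3)}{2}$. Hence $(c_1c_3)^2 = (-1)^{\frac{1}{2}h_\pi(h_\pi+1)}$ by exactly the computation quoted earlier for the $c_i$. Therefore $(c_1c_3)^2 = 1$ iff $h_\pi \equiv 0$ or $3 \bmod 4$, as stated in the paragraph preceding the lemma.

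Now I would apply the lemma: $h_\pi$ is even. An even integer congruent to $0$ or $3\bmod 4$ must be congruent to $0\bmod 4$, since $3\bmod 4$ is odd. Conversely any multiple of $4$ is of course $\equiv 0 \bmod 4$ and even. Thus, under the evenness constraint, the condition "$h_\pi\equiv 0$ or $3\bmod 4$" collapses to "$4 \mid h_\pi$", which is precisely the claim.

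There is essentially no obstacle here: the proposition is a bookkeeping consequence of the displayed square formula (already established for the $c_i$ and applying verbatim to $c_1c_3$) together with the parity lemma. The only point requiring a word of care is the justification that $c_1c_3$ really is, up to sign, a product of $h_\pi$ orthonormal vectors — this follows because $\pi(s_1)$ and $\pi(s_3)$ act on disjoint coordinate blocks (for $i=1,k=3$ the transpositions are supported on $\{1,2\}$ and $\{3,4\}$), so their $-1$-eigenspaces are orthogonal and $c_1c_3$ is the product of reflections in an orthonormal basis of the direct sum, which is exactly the $-1$-eigenspace of the product $\pi(s_1s_3)$ of dimension $h_\pi$.
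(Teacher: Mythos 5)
Your overall route is the paper's: the second lifting condition reduces to $(c_1c_3)^2=1$, this square equals $(-1)^{h_\pi(h_\pi+1)/2}$ by the same computation used for $c_i^2$, and combining the resulting condition $h_\pi\equiv 0$ or $3\pmod 4$ with the parity lemma collapses it to $4\mid h_\pi$. The paper indeed presents this proposition as an immediate consequence of the preceding paragraph and the lemma.

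The one genuine error is in your closing ``word of care.'' The $-1$-eigenspaces of $\pi(s_1)$ and $\pi(s_3)$ need \emph{not} be orthogonal: the fact that the transpositions $s_1$ and $s_3$ have disjoint supports in $\{1,\dots,n\}$ says nothing about how $\pi(s_1)$ and $\pi(s_3)$ decompose $V$, and your argument conflates the permutation action with the linear action of $\pi$. For example, if $\pi$ is a sum of $k$ copies of $\sgn$, then $\pi(s_1)=\pi(s_3)=-\mathrm{Id}$, both $-1$-eigenspaces are all of $V$ (so $g_\pi=k$), yet $\pi(s_1s_3)=\mathrm{Id}$ (so $h_\pi=0$): the two eigenspaces coincide rather than being orthogonal, and their sum bears no relation to the trivial $-1$-eigenspace of $\pi(s_1s_3)$. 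The correct justification for the step you flag is simpler and does not involve the eigenspaces of $\pi(s_1)$ or $\pi(s_3)$ at all: $c_1c_3$ is a lift of $\pi(s_1s_3)$ because $\rho$ is a homomorphism; the product $w_1\cdots w_{h_\pi}$ of an orthonormal basis of the $-1$-eigenspace of $\pi(s_1s_3)$ is another lift, since $\pi(s_1s_3)$, as an involution in $\Or(V)$, is the product of those $h_\pi$ reflections; and two lifts of the same element of $\Or(V)$ differ by $\pm 1$. Hence $(c_1c_3)^2=(w_1\cdots w_{h_\pi})^2=(-1)^{h_\pi(h_\pi+1)/2}$, which is exactly what your argument needs.
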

\qedsymbol

We summarize the above as the following:

\begin{theorem} \label{tfae.spin} Let $n \geq 4$, and $\pi$ a representation of $S_n$.  The following are equivalent:
\begin{enumerate}
\item The representation $\pi$ is spinorial.
\item $g_\pi \equiv 0$ or $3 \mod 4$, and $h_\pi \equiv 0 \mod 4$.
 \item  $\chi_\pi(1)-\chi_\pi(s_1) \equiv 0$ or $6 \mod 8$, and
$ \chi_\pi(1) \equiv \chi_\pi(s_1s_3)  \mod 8$. \end{enumerate}
When $\pi$ is spinorial it has two lifts.
\end{theorem}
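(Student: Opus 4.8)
The plan is to assemble Theorem~\ref{tfae.spin} directly from the three propositions already in hand, which together analyze all three lifting conditions. First I would recall that, by the generators-and-relations description of $S_n$, a representation $\pi\colon S_n\to\Or(V)$ lifts to $\Pin(V)$ precisely when one can choose signs $x_i=\pm c_i$ on the canonical lifts of $\pi(s_i)$ so that the first, second, and third lifting conditions all hold simultaneously. The content of the preceding discussion is that these choices decouple: the first and third conditions together are governed solely by the parity class of $g_\pi$ (with $c_1$ free and $c_2,c_3,\dots$ then forced, a choice that does not disturb the squaring relation), while the second condition, once the $c_i$ square to $1$, reduces by conjugation to the single relation $(c_1c_3)^2=1$ and is governed by $h_\pi$. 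So the equivalence of (1) and (2) is just the conjunction of the two propositions, once I check that the sign choices they each make are mutually compatible.

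The key point to spell out is exactly that compatibility. For the third condition, the argument fixes the sign of $c_1$ arbitrarily and then determines the signs of $c_2,\dots,c_{n-1}$; crucially, flipping the sign of any $c_i$ changes neither $c_i^2$ nor any $(c_ic_k)^2$, so imposing the third condition costs nothing for the first or second. Thus the ``$g_\pi\equiv 0$ or $3\bmod 4$'' clause (first and third conditions, Proposition on page) and the ``$h_\pi\equiv 0\bmod 4$'' clause (second condition, using Lemma that $h_\pi$ is even to rule out $h_\pi\equiv 3$) are simultaneously achievable iff both hold, which is precisely (2). I would state this explicitly rather than leave it implicit, since it is the only place the three partial results need to be glued.

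The equivalence of (2) and (3) is an elementary translation. Since $g_\pi=\tfrac12(\chi_\pi(1)-\chi_\pi(s_1))$, the condition $g_\pi\equiv 0$ or $3\bmod 4$ is equivalent to $\chi_\pi(1)-\chi_\pi(s_1)\equiv 0$ or $6\bmod 8$; and since $h_\pi=\tfrac12(\chi_\pi(1)-\chi_\pi(s_1s_3))$, the condition $h_\pi\equiv 0\bmod 4$ is equivalent to $\chi_\pi(1)\equiv\chi_\pi(s_1s_3)\bmod 8$. (Here one uses that $\chi_\pi(s_1),\chi_\pi(s_1s_3)\in\Z$ and have the same parity as $\chi_\pi(1)$, so these halves are genuine integers, which is implicit already in the definitions of $g_\pi$ and $h_\pi$.) Finally, for the last sentence: when $\pi$ is spinorial, a lift $\hat\pi$ is determined by the tuple $(x_1,\dots,x_{n-1})$, and the analysis shows the only freedom is the single sign of $x_1$ (the rest being forced by the third condition), so there are exactly two lifts, differing by the sign character $S_n\to\{\pm 1\}\subset\Pin(V)$ composed with $\hat\pi$.

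I do not anticipate a serious obstacle here, as the theorem is a summary; the one place demanding care is verifying that the sign choice forced by the third lifting condition does not interfere with the first and second — i.e.\ that the three conditions really are independent in the sense described — and that the half-integer quantities $g_\pi,h_\pi$ are well-defined integers so that the congruences in (3) make sense. Both are brief but should be said out loud.
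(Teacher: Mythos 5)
Your proposal is correct and follows essentially the same route as the paper: the paper proves the two propositions (first-and-third conditions governed by $g_\pi$, second by $h_\pi$ after the evenness lemma) and then asserts the equivalence is "clear"; you fill in exactly the gluing argument the paper leaves implicit — that sign flips preserve $c_i^2$ and $(c_ic_k)^2$, so the sign choices forced by the third condition do not interfere with the first or second, and that the congruences translate by doubling mod 8.
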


When $\pi$ is spinorial, note that $g_\pi \equiv 0$ iff $\pi$ is achiral, and $g_\pi \equiv 3$ iff $\pi$ is chiral.

\begin{proof} The equivalence should be clear; the two lifts correspond to the choice of sign for $c_1$.
\end{proof}

{\bf Remark:} The two lifts correspond to the two members of $H^1(S_n,\Z/2\Z)$; see Theorem II.1.7 of \cite{Lawson}.
  
  \begin{cor} \label{rest.spin.prop} A representation $\pi$ of $S_n$ is spinorial iff its restrictions to the cyclic subgroups  $\lip s_1 \rip$ and  $\lip s_1 s_3 \rip$ are both spinorial.
  \end{cor}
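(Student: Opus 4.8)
The plan is to deduce this directly from Theorem \ref{tfae.spin} together with the analysis of the first lifting condition carried out in Section \ref{lifting.criteria.subs}. The first observation is that both $\lip s_1 \rip$ and $\lip s_1 s_3 \rip$ are cyclic of order $2$: the element $s_1 s_3$, being a product of two disjoint transpositions, is an involution. For a cyclic group $C = \lip \sigma \rip$ of order $2$, presented by the single relation $\sigma^2 = 1$, specifying a lift $\hat\pi \colon C \to \Pin(V)$ of a representation $\pi|_C$ amounts to choosing $x \in \Pin(V)$ with $\rho(x) = \pi(\sigma)$ and $x^2 = 1$ --- exactly the ``first lifting condition'', with the second and third conditions vacuous. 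By the computation in Section \ref{lifting.criteria.subs}, if $g$ denotes the multiplicity of the eigenvalue $-1$ of $\pi(\sigma)$, then both lifts of $\pi(\sigma)$ square to $(-1)^{\frac{1}{2} g(g+1)}$, so $\pi|_C$ is spinorial iff $g \equiv 0$ or $3 \bmod 4$.

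I would then apply this twice. For $C = \lip s_1 \rip$ the relevant multiplicity is $g_\pi = \tfrac{1}{2}(\chi_\pi(1) - \chi_\pi(s_1))$, so $\pi|_{\lip s_1 \rip}$ is spinorial iff $g_\pi \equiv 0$ or $3 \bmod 4$. For $C = \lip s_1 s_3 \rip$ the relevant multiplicity is $h_\pi = \tfrac{1}{2}(\chi_\pi(1) - \chi_\pi(s_1 s_3))$, so $\pi|_{\lip s_1 s_3 \rip}$ is spinorial iff $h_\pi \equiv 0$ or $3 \bmod 4$; here I would invoke the Lemma that $h_\pi$ is even, which excludes $h_\pi \equiv 3 \bmod 4$, so $\pi|_{\lip s_1 s_3 \rip}$ is spinorial iff $h_\pi \equiv 0 \bmod 4$.

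Finally I would combine the two: by Theorem \ref{tfae.spin}, $\pi$ is spinorial iff $g_\pi \equiv 0$ or $3 \bmod 4$ and $h_\pi \equiv 0 \bmod 4$, which by the preceding paragraph is precisely the conjunction of spinoriality of $\pi|_{\lip s_1 \rip}$ and of $\pi|_{\lip s_1 s_3 \rip}$. There is no genuine obstacle here; the only points needing a little care are that for an order-$2$ cyclic group all lifting conditions beyond the first are vacuous, so spinoriality of $\pi|_C$ is governed entirely by the square of a lift, and that the eigenvalue multiplicities $g_\pi$ and $h_\pi$ are unaffected by restriction, depending only on $\pi(s_1)$ and $\pi(s_1 s_3)$ respectively.
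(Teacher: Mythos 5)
Your proposal is correct and takes essentially the same approach as the paper: the paper's proof is a one-line remark that ``the $g_\pi$ condition corresponds to the subgroup $\lip s_1 \rip$, and the $h_\pi$ condition corresponds to $\lip s_1 s_3 \rip$,'' and your argument simply spells out the details that this remark leaves implicit, namely that for an order-$2$ cyclic group spinoriality is governed entirely by whether a lift squares to $1$, which reduces to the mod-$4$ condition on the $-1$-eigenvalue multiplicity computed in Section \ref{lifting.criteria.subs}.
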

  \begin{proof} Indeed, the $g_\pi$ condition corresponds to the subgroup $\lip s_1 \rip$, and the $h_\pi$ condition corresponds to  $\lip s_1 s_3 \rip$.
  \end{proof}
   
  The irreducible representations  of $S_n$ are the Specht modules $(\sigma_\lambda,V_\lambda)$, indexed by partitions of $n$.     (See for instance \cite{jameskerber}.) Write $f_\lambda=f_{\pi_\lambda}$, and similarly for   $g_\lambda, h_\lambda$.   Write $p(n)$ for the number of partitions of $n$.
  
  \begin{cor}  \label{percent} We have
  \beq
  \lim_{n \to \infty} \frac{ \# \{ \lambda \vdash n \mid \sigma_\lambda \text{ is achiral and spinorial   } \}}{p(n)}=1.
  \eeq
  \end{cor}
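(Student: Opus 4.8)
The plan is to combine the lifting criterion of Theorem \ref{tfae.spin} with known $2$-adic divisibility properties of the characters $\chi_\lambda$, the latter imported from \cite{GPS}.

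First I would translate ``$\sigma_\lambda$ is achiral and spinorial'' into a congruence on character values. By Theorem \ref{tfae.spin}, $\sigma_\lambda$ is spinorial iff $g_\lambda \equiv 0$ or $3 \mod 4$ and $h_\lambda \equiv 0 \mod 4$. Since $\det \sigma_\lambda(s_1) = (-1)^{g_\lambda}$, it is achiral iff $g_\lambda$ is even, so (as observed just after Theorem \ref{tfae.spin}) ``achiral and spinorial'' is equivalent to $g_\lambda \equiv 0 \mod 4$ together with $h_\lambda \equiv 0 \mod 4$. Recalling $g_\lambda = \tfrac12(f_\lambda - \chi_\lambda(s_1))$ and $h_\lambda = \tfrac12(f_\lambda - \chi_\lambda(s_1 s_3))$, this amounts to
\beq
f_\lambda \equiv \chi_\lambda(s_1) \equiv \chi_\lambda(s_1 s_3) \mod 8 ,
\eeq
and a fortiori it holds whenever each of $f_\lambda = \chi_\lambda(1)$, $\chi_\lambda(s_1)$, $\chi_\lambda(s_1 s_3)$ is divisible by $8$.

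Next I would invoke the main result of \cite{GPS}: for a fixed permutation $g$ (we need $g = s_1$ and $g = s_1 s_3$, both of which lie in $S_4 \subseteq S_n$) the proportion of $\lambda \vdash n$ with $8 \mid \chi_\lambda(g)$ tends to $1$ as $n \to \infty$. Combined with the analogous statement for the dimension $f_\lambda$ --- namely that the number of $\lambda \vdash n$ with $8 \nmid f_\lambda$ is $o(p(n))$, a classical fact via $2$-core towers (or itself a special case of \cite{GPS}) --- this produces three subsets of $\{\lambda \vdash n\}$, each of density approaching $1$. A finite intersection of density-one families is again density one, so the set of $\lambda$ for which $8$ divides all three of $f_\lambda$, $\chi_\lambda(s_1)$, $\chi_\lambda(s_1 s_3)$ has density tending to $1$; by the first step each such $\sigma_\lambda$ is achiral and spinorial, which is the assertion.

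The step I expect to be most delicate is lining up this reduction with the precise form in which \cite{GPS} is stated: Theorem \ref{tfae.spin} only needs the weaker congruences $f_\lambda \equiv \chi_\lambda(s_1) \equiv \chi_\lambda(s_1 s_3) \mod 8$ rather than divisibility of each term, so if \cite{GPS} is phrased in terms of the differences $\chi_\lambda(1) - \chi_\lambda(g)$ (equivalently $g_\lambda$ and $h_\lambda$) one should apply it in that form, and one should confirm that the case $g = 1$ is covered there or else supply it by the elementary argument above. Beyond Theorem \ref{tfae.spin} and this density input, nothing further is needed.
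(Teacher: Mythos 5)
Your proof is correct and follows essentially the same route as the paper's: invoke \cite{GPS} for the density-one divisibility of $\chi_\lambda(1)$, $\chi_\lambda(s_1)$, $\chi_\lambda(s_1s_3)$ by $8$, then deduce the claim from the spinoriality criterion of Theorem~\ref{tfae.spin}. The paper's version is simply terser, citing \cite{GPS} directly for the combined statement $\chi_\lambda(1)\equiv\chi_\lambda(s_1)\equiv\chi_\lambda(s_1s_3)\equiv 0 \bmod 8$ on a density-one set rather than intersecting three separate density-one families, so your caution about the precise form of \cite{GPS} is unnecessary.
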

  
  In other words, as $n \to \infty$, $100\%$ of irreducible representations of $S_n$ are achiral and spinorial.
  
  \begin{proof} 
  According to \cite{GPS}, as $n \to \infty$, $100\%$ of partitions $\la$ of $n$ have 
  \beq
  \chi_\la(1) \equiv \chi_\la(s_1) \equiv \chi_\la(s_1s_3) \equiv 0 \mod 8.
  \eeq
  The conclusion then follows from Theorem \ref{char.spin.crit}.

  \end{proof}

  \subsection{Connection with Skew Young Tableaux}

  Let $\mu,\la$ be partitions for which the Young diagram of $\la$ contains that of $\mu$.  The notion of standard Young tableaux generalizes to ``skew diagrams" $\la / \mu$.  Following Section 7.10 in \cite{RStan2}, write $f_{\la / \mu}$ for the number of SYT on $\la / \mu$.  (If the Young diagram of $\mu$ is not contained in that of $\la$, put $f_{\la / \mu}=0$.)   
    
  \begin{prop} We have
  \begin{enumerate}
  \item $g_\lambda=f_{\lambda / (1,1)}$ and
  \item \label{blub} $h_\lambda=2 \cdot (f_{\la / (3,1)} + f_{\la / (2,1,1)})$.
  \end{enumerate}
  \end{prop}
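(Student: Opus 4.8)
The plan is to express $g_\lambda$ and $h_\lambda$ as character-theoretic quantities and then translate them into skew tableaux counts using the branching rule for symmetric groups. Recall that $g_\lambda = \frac{1}{2}(\chi_\lambda(1) - \chi_\lambda(s_1))$ is the multiplicity of the eigenvalue $-1$ of $\sigma_\lambda(s_1)$, and $h_\lambda = \frac{1}{2}(\chi_\lambda(1) - \chi_\lambda(s_1 s_3))$ is the multiplicity of $-1$ of $\sigma_\lambda(s_1 s_3)$. The key tool is that these multiplicities can be computed by restricting $V_\lambda$ to the relevant small subgroups and decomposing into one-dimensional characters.

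First I would handle (1). Restrict $\sigma_\lambda$ to the subgroup $\langle s_1 \rangle \cong S_2$ acting on $\{1,2\}$, or better, view $s_1$ as the nontrivial element of the $S_2$ sitting inside $S_n$ permuting $1,2$ and fixing $3, \dots, n$. The multiplicity of the sign character of this $S_2$ in $V_\lambda|_{S_2}$ equals $g_\lambda$. By iterating the branching rule twice (removing two boxes from $\lambda$), $V_\lambda|_{S_{n-2}}$ decomposes with multiplicity $f_{\lambda/\nu}$ attached to $V_\nu$ for each $\nu \vdash n-2$ with $\nu \subseteq \lambda$; more precisely, restricting to $S_2 \times S_{n-2}$, the multiplicity of $(\mathrm{sgn}_{S_2}) \boxtimes V_\nu$ is the number of ways to build $\lambda$ from $\nu$ by adding two boxes in distinct rows — equivalently $f_{\lambda/\nu}$ where the skew shape $\lambda/\nu$ is a vertical domino. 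Summing over $\nu$ and noting that $\sum_\nu f_{\lambda/\nu}$ over such $\nu$ with $\lambda/\nu$ a vertical domino equals $f_{\lambda/(1,1)}$ (the skew shape $(1,1)$ being precisely a vertical domino, and an SYT on $\lambda/(1,1)$ is the same data as: a chain $\mu^{(0)} = (1,1) \subset \mu^{(1)} \subset \cdots$), gives $g_\lambda = f_{\lambda/(1,1)}$. I should double-check the indexing convention here — that $f_{\lambda/(1,1)}$ counts fillings of the complement of a $(1,1)$ corner — but this is the standard meaning in Stanley.

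For (2), the same strategy applies with $s_1 s_3$ in place of $s_1$. Here $\langle s_1 s_3 \rangle \cong \mathbb{Z}/2\mathbb{Z}$ sits inside $S_4 \cong$ the group permuting $\{1,2,3,4\}$, fixing $5,\dots,n$. One writes $V_\lambda|_{S_4 \times S_{n-4}} = \bigoplus_{\nu \vdash n-4,\ \nu \subseteq \lambda} V_{\lambda/\nu}^{S_4} \boxtimes V_\nu$, where $V_{\lambda/\nu}^{S_4}$ denotes the skew Specht module, whose dimension is $f_{\lambda/\nu}$. So $h_\lambda$, the multiplicity of $-1$ for $s_1 s_3$, equals $\sum_\nu m_\nu f_{\lambda/\nu}$, where $m_\nu$ is the multiplicity of the $-1$-eigenvalue of $s_1 s_3$ acting on the irreducible $S_4$-module determined by the shape $\lambda/\nu$. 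Since $\lambda/\nu$ has four boxes, this reduces to a finite computation on the five irreducible representations of $S_4$: compute $\frac{1}{2}(\dim - \chi(s_1 s_3))$ for each of the shapes $(4), (3,1), (2,2), (2,1,1), (1,1,1,1)$. Recall $s_1 s_3 = (12)(34)$ has cycle type $2^2$; the character values of $S_4$ at class $2^2$ are $1, -1, 2, -1, 1$ for those shapes, with dimensions $1, 3, 2, 3, 1$, so the $-1$-multiplicities are $0, 2, 0, 2, 0$. Thus only skew shapes of content equal to the $S_4$-irreducibles $(3,1)$ and $(2,1,1)$ contribute, each with weight $2$. Decomposing the skew Specht module $V_{\lambda/\nu}^{S_4}$ into ordinary $S_4$-irreducibles and using Littlewood–Richardson, $\sum_\nu 2(\langle V_{\lambda/\nu}^{S_4}, (3,1)\rangle + \langle V_{\lambda/\nu}^{S_4}, (2,1,1)\rangle) f_{\text{rest}} = 2(f_{\lambda/(3,1)} + f_{\lambda/(2,1,1)})$, using the standard identity $\sum_{\nu} (\text{mult of }\tau\text{ in }\lambda/\nu\text{-skew module}) f_{\nu/\emptyset\text{-part}} = f_{\lambda/\tau}$.

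The main obstacle I anticipate is getting the combinatorial bookkeeping exactly right: carefully justifying the identity $\sum_{\nu} \langle \mathrm{Res}^{S_n}_{S_k \times S_{n-k}} V_\lambda, V_\tau \boxtimes V_\nu \rangle \, f_\nu \cdot (\text{something}) = f_{\lambda/\tau}$ and handling the weights $2$ cleanly, rather than the representation theory of $S_4$, which is just a small table lookup. It will help to phrase everything in terms of the skew Specht module $S^{\lambda/\nu}$ and the classical fact $\dim S^{\lambda/\mu} = f_{\lambda/\mu}$ together with $S^{\lambda/\mu}|_{S_{|\mu|} \times S_{|\lambda/\mu|}}$-type identities, or more directly via the Jacobi–Trudi / Littlewood–Richardson expansion $s_{\lambda/\mu} = \sum_\tau c^\lambda_{\mu\tau} s_\tau$, so that evaluating the relevant symmetric function identity at the right specialization yields both parts at once.
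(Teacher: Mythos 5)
Your proof is correct and, despite some loose wording in part (1) (you write $\sum_\nu f_{\lambda/\nu}$ where you mean $\sum_\nu f_\nu$ over $\nu \vdash n-2$ with $\lambda/\nu$ a vertical strip, i.e.\ $\sum_\nu c^\lambda_{(1,1),\nu} f_\nu$), it is essentially the same argument as the paper's. The paper cites Stanley's Exercise 7.62, the identity $\chi_\lambda(w_{\underline\mu}) = \sum_{\nu \vdash k}\chi_\nu(w_\mu) f_{\lambda/\nu}$, and then does the arithmetic using the $S_4$ character table; you instead re-derive exactly this identity from the branching decomposition $V_\lambda|_{S_k \times S_{n-k}} = \bigoplus_\nu S^{\lambda/\nu} \boxtimes V_\nu$ together with $\sum_\nu c^\lambda_{\tau\nu} f_\nu = f_{\lambda/\tau}$, and phrase the final computation in terms of $-1$-eigenvalue multiplicities $m_\tau = \tfrac12(\dim V_\tau - \chi_\tau(s_1s_3))$ rather than subtracting two applications of the character identity. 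These are the same calculation repackaged: the eigenvalue-multiplicity phrasing is slightly more conceptual, while the paper's citation of Stanley 7.62 is more compact. Both reduce to looking up the $S_4$ character table at the class $2^2$, and both use the same skew-tableau identity as the crucial link.
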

  
  \begin{proof} 
  
  Let $\mu \vdash k$ for some $k \leq n$, and let $\ul{\mu}$ be the partition of $n$ defined by adding $(n-k)$ $1$'s, i.e. $\ul \mu=\mu+\underbrace{1+ \cdots + 1}_{(n-k) \text{ times}}$.  Write $w_{\mu} \in S_n$ be a permutation with cycle type $\mu$.  According to \cite{RStan2}, Exercise 7.62, we have  
  \beq
  \chi_\lambda(w_{\ul \mu})=\sum_{\nu \vdash k} \chi_\nu(w_\mu) \cdot f_{\la / \nu}.
  \eeq
  
  Taking $\mu=(2)$ gives
  \beq
  \chi_\la(s_1)=f_{\la / (2)}-f_{\la / (1,1)},
  \eeq
  and taking $\mu=(1,1)$ gives
  \beq
  \chi_\la(1)=f_{\la / (2)}+f_{\la / (1,1)},
  \eeq
  so that $g_\la=f_{\la / (1,1)}$.
  
  Similarly, taking $\mu=(2,2)$ and using the character table for $S_4$, we compute
  \begin{equation} \label{common}
  \chi_\la(s_1s_3) = f_{\la/ (4)} - f_{\la / (3,1)}+ 2 f_{\la / (2,2)}-f_{\la / (2,1,1)}+ f_{\la / (1,1,1,1)}.
  \end{equation}
   Taking $\mu=(1,1,1,1)$ gives
   \begin{equation} \label{ground}
   \chi_\la(1)= f_{\la/ (4)} +3 f_{\la / (3,1)}+ 2 f_{\la / (2,2)}+3 f_{\la / (2,1,1)}+ f_{\la / (1,1,1,1)}.
   \end{equation}
   Combining \eqref{common} and \eqref{ground} gives the formula  for $h_\la$.

 
  \end{proof}
 

  \subsection{Permutation Representations} \label{perm.section}
  Another important class of representations of $S_n$ are the permutation representations, which are also indexed by partitions of $n$.
  Let $\la=(\la_1, \ldots, \la_\ell) \vdash n$, and consider the set $\mc P_\la$ of ordered partitions of $\{ 1, 2, \ldots, n \}$ with shape $\la$.  
  Thus a member of $\mc P_\la$ is an $\ell$-tuple $(X_1, \ldots, X_\ell)$  of disjoint sets with each $|X_i|=\la_i$ and $\bigcup_{i=1}^\ell X_i=\{ 1, \ldots, n \}$.
  Note that $\mc P_\la$ has cardinality
    \begin{equation} \label{perm.degree}
  \binom{n}{\la_1,\ldots, \la_\ell}.
  \end{equation}
  The group $S_n$ acts on $\mc P_\la$ in the obvious way, and we obtain the permutation representation $\R[\mc P_\la]$.  This representation space is given by formal linear combinations of elements of $\mc P_\la$, so its degree is given by \eqref{perm.degree}.

For example, if $\la=(1, \ldots, 1) \vdash n$, then $\R[\mc P_\la]$ is the regular representation of $S_n$.  If $\la=(n-1,1)$, then $\R[\mc P_\la]$ is the standard representation $\pi_n$ of $S_n$ on $\R^n$. Note that $S_n$ acts transitively on $\mc P_\la$ with a stabilizer equal to the ``Young subgroup" $S_\la=S_{\la_1} \times \cdots \times S_{\la_\ell}$, so we can also view  $\R[\mc P_\la]$  as the induction from $S_\la$ to $S_n$ of the trivial representation.

The characters of the $\R[\mc P_\la]$, though typically reducible, form an important basis of the representation ring of $S_n$.  See, for example, Section 2.2 of \cite{jameskerber}.

Recall that, for a representation $\pi$ coming from a group action, the character value $\chi_\pi(g)$ is the number of fixed points of $g$.  Write $\Theta_\la$ for the character $\chi_{\R[\mc P_\la]}$.

From this fixed-point principle we compute
\beq
\Theta_\la(s_1)=\sum_{|\la_i| \geq 2} \binom{n-2}{\la_i, \ldots, \la_i-2, \ldots, \la_\ell},
\eeq
since a partition in $\mc P_\la$ is fixed by $s_1$ iff $1$ and $2$ lie in the same part $X_i$ for some $i$.
 Similarly, $ \Theta_\la(s_1s_3)$ equals
 \beq
 \sum_{\substack{1\leq i<j\leq l\\|\lambda_i|\geq 2, |\lambda_j|\geq 2}}{\binom {n-4} {\lambda_1,\dots,\lambda_{i}-2,\dots,\lambda_{j}-2,\dots, \lambda_l}}
+\sum_{|\lambda_k|\geq 4}{\binom {n-4} {\lambda_1,\dots,\lambda_{i}-4,\dots, \lambda_l}}.
\eeq
  This is because a partition in $\mc P_\la$  is fixed by $s_1s_3$ iff either the elements $1,2,3,4$ all lie in the same part $X_i$, or $1,2$ lie in some $X_i$ and $3,4$ lie some other part $X_j$. 
  
  These character values may be used to compute $g_{\R[\mc P_\la]}$ and $h_{\R[\mc P_\la]}$.  (Compare Lemma 17 in \cite{APS}.) For instance if $\la=(1,\ldots, 1) \vdash n$, then $\Theta_\la(s_1)= \Theta_\la(s_1s_3)=0$, so
   $g_{\R[\mc P_\la]}=h_{\R[\mc P_\la]}=\frac{n!}{2}$.  Thus the regular representation of $S_n$ is achiral and spinorial.

The standard representation $\pi_n$ corresponds to $\la=(n-1,1)$.  For this $\la$ we have $\Theta_\la(s_1)=n-2$, so $g_{\R[\mc P_\la]}=1$ and it follows that $\pi_n$ is aspinorial.
   
For easy reference, we collect here results about common representations of $S_n$:
   
  \begin{prop}   For $n \geq 2$ the standard representation $\pi_n$ is achiral and aspinorial, and the sign representation is achiral and aspinorial.
   For $n \geq 4$, the regular representation of $S_n$ is achiral and spinorial.   \end{prop}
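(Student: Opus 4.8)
The plan is to verify each of the three claims by computing the relevant character values at $1$, $s_1$, and $s_1 s_3$, and then invoking Theorem \ref{tfae.spin} (the criterion in terms of $g_\pi$ and $h_\pi$). In fact the hard analytic work has already been done in the permutation-representation discussion just above, so the proof is largely a matter of assembling those computations with the corresponding ones for the sign representation and a few small sanity checks for the standard representation.

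First I would dispatch the standard representation $\pi_n$. Since $\pi_n = \R[\mc P_{(n-1,1)}]$, the fixed-point principle gives $\Theta_{(n-1,1)}(s_1) = n-2$, hence $g_{\pi_n} = \tfrac12\big((n-1) - (n-2)\big) = 1$, which is congruent to $1 \bmod 4$, so by Theorem \ref{tfae.spin}(2) the first lifting condition already fails and $\pi_n$ is aspinorial. That $\det \pi_n = 1$ is classical: a transposition acts on $\R^n$ by swapping two coordinates, a transposition of basis vectors, which has determinant $-1$; wait --- I must be careful here, since this would make $\pi_n$ chiral, not achiral. The resolution is that the paper means the \emph{standard} representation $\pi_n$ as the full permutation representation on $\R^n$ (degree $n$), for which $\det \pi_n(s_i) = -1 = \sgn(s_i)$, so in fact $\pi_n$ would be chiral; but the statement claims achiral. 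So I would instead read $\pi_n$ consistently with the paper's own phrase ``the standard representation of $S_n$ by permutation matrices'' of degree $n$ and simply cite the already-established computation $g_{\pi_n} = 1$, noting $g_{\pi_n} \equiv 1 \bmod 4$ forces aspinoriality regardless of chirality; the chirality assertion I would derive from $\det \pi_n(s_i) = -1$ --- and here I flag that the reader should double-check whether the intended ``standard representation'' is the $n$-dimensional one or its $(n-1)$-dimensional irreducible summand, since that changes the parity of $g$ and hence the chirality label. (For the $(n-1)$-dimensional irreducible $\sigma_{(n-1,1)}$ one has $g = 1$ still and $\det = \sgn$, so chiral; for the $n$-dimensional one $g=1$, $\det = \sgn$, again chiral. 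In either case the ``achiral'' in the proposition needs reconciling, so I would simply present the $g$-computation, which is what actually drives aspinoriality, and state the determinant fact plainly.)

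Next, the sign representation $\mathrm{sgn}$ has degree $1$, with $\chi_{\mathrm{sgn}}(1) = 1$ and $\chi_{\mathrm{sgn}}(s_1) = -1$, so $g_{\mathrm{sgn}} = \tfrac12(1-(-1)) = 1 \equiv 1 \bmod 4$; again Theorem \ref{tfae.spin}(2) fails at the first lifting condition, so $\mathrm{sgn}$ is aspinorial. Since $\det \mathrm{sgn} = \mathrm{sgn} \cdot \mathrm{sgn}$? --- no: $\det$ of a $1$-dimensional representation is the representation itself, so $\det \mathrm{sgn} = \mathrm{sgn}$, which would make $\mathrm{sgn}$ chiral, not achiral; yet again I would flag the discrepancy with the proposition's wording and present the computation that matters. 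Finally, for the regular representation $\R[\mc P_{(1,\ldots,1)}]$ with $n \geq 4$, the displayed computation above gives $\Theta_{(1,\ldots,1)}(s_1) = \Theta_{(1,\ldots,1)}(s_1 s_3) = 0$, whence $g = h = \tfrac{n!}{2}$. One checks $\tfrac{n!}{2} \equiv 0 \bmod 4$ for $n \geq 4$ (since $n!$ is divisible by $8$ once $n \geq 4$), so both the $g$-condition ($g \equiv 0 \bmod 4$) and the $h$-condition ($h \equiv 0 \bmod 4$) of Theorem \ref{tfae.spin}(2) hold, giving spinoriality; and $\det$ of the regular representation is trivial because every linear character appears in it with its own multiplicity, so the product of all determinants is $1$ --- more simply, $\R[S_n]$ contains each irreducible with positive multiplicity and the regular representation of any group with nontrivial center-free structure --- actually cleanest: $\det \R[\mc P_\la]$ evaluated on $s_i$ equals $(-1)^{g_{\R[\mc P_\la]}}$, and $g = n!/2$ is even for $n \geq 4$, so $\det = 1$ and the regular representation is achiral.

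The main obstacle --- really the only one --- is bookkeeping consistency about chirality: one must settle once and for all the identity $\det \pi(s_i) = (-1)^{g_\pi}$ (clear, since $\pi(s_i)$ has eigenvalue $-1$ with multiplicity $g_\pi$ and eigenvalue $1$ otherwise), and then the three chirality claims reduce to parity checks: $g_{\pi_n} = 1$ odd, $g_{\mathrm{sgn}} = 1$ odd, $g_{\mathrm{reg}} = n!/2$ even for $n \geq 4$. If the proposition's ``achiral'' is to stand as written for $\pi_n$ and $\mathrm{sgn}$, the only way is if the paper's sign conventions differ from what I expect; otherwise those two should read ``chiral.'' I would therefore write the proof to make the $g$-values and the identity $\det\pi(s_i)=(-1)^{g_\pi}$ explicit, derive spinoriality/aspinoriality rigorously from Theorem \ref{tfae.spin}, and state the determinant consequence in a way the reader can immediately verify, so that any convention mismatch in the word ``achiral'' is transparent rather than buried.

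\begin{proof}
All three assertions follow from the computations of $g_\pi$ and $h_\pi$ above together with Theorem \ref{tfae.spin}. Recall first that for any representation $\pi$ of $S_n$, the operator $\pi(s_i)$ has eigenvalue $-1$ with multiplicity $g_\pi$ and eigenvalue $+1$ with multiplicity $\deg\pi - g_\pi$, so
\beq
(\det\pi)(s_i)=(-1)^{g_\pi}.
\eeq
Since the $s_i$ generate $S_n$, this determines $\det\pi$: it is trivial exactly when $g_\pi$ is even.

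For $\pi=\pi_n$, which is $\R[\mc P_{(n-1,1)}]$, the fixed-point count gives $\Theta_{(n-1,1)}(s_1)=n-2$, so $g_{\pi_n}=\tfrac12\big((n-1)-(n-2)\big)=1$. As $1\not\equiv 0,3 \bmod 4$, Theorem \ref{tfae.spin}(2) shows $\pi_n$ is aspinorial for all $n\geq 4$; the same conclusion for $n=2,3$ is immediate since then $\langle s_1\rangle$ carries a faithful character with eigenvalue $-1$, which does not lift. Here $(\det\pi_n)(s_i)=(-1)^1$.

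For the sign representation, $\deg=1$, $\chi(1)=1$, $\chi(s_1)=-1$, so $g_{\mathrm{sgn}}=1$, and again Theorem \ref{tfae.spin}(2) shows it is aspinorial for $n \geq 4$ (and directly for smaller $n$). Here $(\det \mathrm{sgn})(s_i) = (-1)^1$.

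For the regular representation $\R[\mc P_{(1,\ldots,1)}]$ with $n\geq 4$, the displayed computations above give $\Theta_{(1,\ldots,1)}(s_1)=\Theta_{(1,\ldots,1)}(s_1s_3)=0$, hence $g=h=n!/2$. Since $n\geq 4$, the integer $n!$ is divisible by $8$, so $n!/2\equiv 0\bmod 4$; thus $g\equiv 0\bmod 4$ and $h\equiv 0\bmod 4$, and Theorem \ref{tfae.spin}(2) shows the regular representation is spinorial. Moreover $g=n!/2$ is even, so $(\det\R[\mc P_{(1,\ldots,1)}])(s_i)=1$ and the regular representation is achiral.
\end{proof}
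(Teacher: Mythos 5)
Your computation-driven argument is correct and matches exactly what the paper leaves implicit: the Proposition is stated as a summary of the permutation-module calculations in Section \ref{perm.section} and is given no separate proof, so the intended route is precisely yours --- compute $g_\pi$ (and $h_\pi$), then apply Theorem \ref{tfae.spin} together with the identity $(\det\pi)(s_i)=(-1)^{g_\pi}$. Your handling of $n=2,3$ is also correct, though you could more cleanly cite the paper's first Proposition in Section \ref{lifting.criteria.subs} (``the first and third lifting conditions hold iff $g_\pi\equiv 0$ or $3\bmod 4$''), which carries no $n\geq 4$ hypothesis; since the second lifting condition is vacuous for $n<4$, spinoriality for $n=2,3$ reduces to that single congruence, and $g=1$ fails it.

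More importantly, you have correctly identified an error in the statement itself. With the paper's own definitions --- chiral means $\det\pi=\sgn$, equivalently $g_\pi$ odd --- both $\pi_n$ (as the $n$-dimensional permutation representation, $g_{\pi_n}=1$) and $\sgn$ (degree $1$, $g_{\sgn}=1$) are chiral, not achiral. This is corroborated by the paper's own Table~1, which lists $\sigma_{(n-1,1)}$ and $\sigma_{(1^n)}$ as chiral. The word ``achiral'' for $\pi_n$ and $\sgn$ in the Proposition must be a typo for ``chiral.'' (The regular-representation clause is correct as stated: $g=n!/2$ is even for $n\geq 4$.) Your instinct to present the $g$-values and the determinant identity explicitly, letting the reader see the congruences directly, is exactly the right way to make the convention mismatch transparent rather than propagate it.
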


\section{Alternating Groups}\label{altgrp}

Now we turn to the alternating group $A_n$, for $n \geq 4$.

\subsection{Spinoriality Criterion}

The group $A_n$ is generated by the permutations
\beq
u_i= s_1 s_{i+1}, \quad (i=1,2,\ldots, n-2)
\eeq
with relations:
\begin{align*}
u_1^3=u_j^2 & =(u_{j-1}u_j)^3=1,\quad (2\leq j\leq n-2),  \\
(u_iu_j)^2 & =1,\quad (1\leq i<j-1,j\leq n-2). \\
\end{align*}
(See for instance \cite{CMGFG}.)

Note that $u_1$ is a $3$-cycle and the other $u_i$ are $(2,2)$-cycles.
 
For a real representation $(\pi,V)$ of $A_n$ again put $h_\pi=\dfrac{\chi_\pi(1)-\chi_\pi(s_1s_3)}{2}$.  Since this is the multiplicity of the eigenvalue $-1$ of $\pi(s_1s_3)$, which has determinant $1$,  the integer $h_\pi$ is  necessarily even.
 
\begin{theorem}\label{spinAn}
A real representation $(\pi,V)$ of $A_n$ is spinorial if and only if  $h_\pi$ is a multiple of $4$.  In this case there is a unique lift.
 
\end{theorem}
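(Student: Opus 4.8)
The plan is to mimic the analysis already carried out for $S_n$, but now using the Coxeter-type presentation of $A_n$ in terms of the generators $u_i = s_1 s_{i+1}$. A homomorphism from $A_n$ into a group is the same as a choice of elements $x_1, \ldots, x_{n-2}$ satisfying the listed relations, so for a representation $\pi: A_n \to \SO(V)$ we must decide whether the two lifts $\pm c_i \in \Spin(V)$ of each $\pi(u_i)$ can be chosen, with consistent signs, to satisfy: $x_1^3 = 1$; $x_j^2 = 1$ for $2 \le j \le n-2$; $(u_{j-1} u_j)^3 = 1$; and $(u_i u_j)^2 = 1$ for $i < j-1$. As in the $S_n$ case, define each $c_i$ as the product $v_1 \cdots v_{k_i}$ of an orthonormal basis of the $(-1)$-eigenspace of $\pi(u_i)$, so that $\rho(\pm c_i) = \pi(u_i)$.

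First I would handle $u_1$, a $3$-cycle. Since $\pi(u_1)^3 = 1$, the eigenvalues of $\pi(u_1)$ are cube roots of unity; it has no eigenvalue $-1$, so $c_1 \in \Spin(V)$ lies in $C(V)^0$ and one can compute directly that $c_1^3 = \pm 1$ and in fact that exactly one of the two lifts cubes to $1$ — or, more cleanly, one observes that the preimage in $\Spin(V)$ of a finite cyclic group of \emph{odd} order always contains a unique complementary subgroup of that order, so the lift of the cyclic relation $u_1^3 = 1$ is automatic and fixes the sign of $c_1$ uniquely. This already gives "unique lift if it exists" and disposes of the first relation with no constraint. Next, for $j \ge 2$ the element $u_j = s_1 s_{j+1}$ is a $(2,2)$-cycle, conjugate in $A_n$ to $s_1 s_3$; its $(-1)$-eigenspace has dimension $h_\pi$ (even, as noted), and the computation $c_j^2 = (-1)^{h_\pi(h_\pi+1)/2}$ from the $S_n$ section shows $c_j^2 = 1$ iff $h_\pi \equiv 0$ or $3 \bmod 4$; since $h_\pi$ is even this is $h_\pi \equiv 0 \bmod 4$. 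So the relations $u_j^2 = 1$ are exactly the stated condition, and — crucially — they hold independently of sign choices, just as before.

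It then remains to check that the braid-type relations $(u_{j-1}u_j)^3 = 1$ and the commutation relations $(u_i u_j)^2 = 1$ (for $i < j-1$) can be satisfied once $c_1$ is pinned down and the $c_j$ ($j \ge 2$) have $c_j^2 = 1$. Here I would argue exactly as in the $S_n$ case: each $(\pi(u_{j-1})\pi(u_j))^3$ and each $(\pi(u_i)\pi(u_j))^2$ lifts to an element of $\{\pm 1\}$ in $\Spin(V)$, and I claim these are all $+1$ automatically. For the commutation relations, conjugacy lets us reduce to a fixed pair, say $u_1 u_j$ with disjoint supports — but wait, $u_1$ involves the symbol $1$, so one should instead reduce to a pair $u_i, u_j$ with $i, j \ge 2$ whose supports overlap only in the symbol $1$ or not at all; the point is that $\pi(u_i)$ and $\pi(u_j)$ commute and each $c_i^2 = c_j^2 = 1$, and a product of two commuting involutions in $\Spin(V)$ lifting commuting involutions in $\SO(V)$ squares to $+1$ precisely when... this is the step I expect to be the main obstacle, because unlike the transpositions $s_1, s_3$ in $S_n$ — whose $(-1)$-eigenspaces are orthogonal — the $(-1)$-eigenspaces of $\pi(u_i)$ and $\pi(u_j)$ need not be orthogonal, so one cannot simply write $c_i c_j$ as a product of $2k$ orthonormal vectors. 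The clean way around this is to \emph{not} reduce to specific generators at all: instead, note that the subgroup $\langle u_i, u_j \rangle$ (for a commuting pair) or $\langle u_{j-1}, u_j\rangle$ (a copy of $A_4$ or its quotient) is small, and appeal to the fact that any representation of such a small group into $\SO(V)$ lifts to $\Spin(V)$ as soon as the cyclic restrictions lift — equivalently, that $H^2$ of these small groups is controlled — so that once the $c_i$ squares are $+1$ and $c_1^3 = 1$ the remaining relations can be met, with the overall sign freedom being only that of $c_1$, which we already showed is zero. Thus there is a unique lift, and $\pi$ is spinorial iff $h_\pi \equiv 0 \bmod 4$. I would double-check the sign-propagation bookkeeping (that fixing $c_1$ forces every $c_j$ and that no contradiction arises around a cycle of relations) using the connectedness of the Coxeter-type graph of the $u_i$, exactly as the $S_n$ argument uses connectedness of the path $s_1 - s_2 - \cdots - s_{n-1}$.
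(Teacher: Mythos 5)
There is a genuine gap at the step you yourself flag as ``the main obstacle.'' The paper's treatment of the commutation relations $(c_ic_j)^2 = 1$ is a one-line conjugacy observation that you miss: for $j > 1$ and $i < j-1$, not only are the $u_j$ conjugate to $u_2 = s_1s_3$ in $A_n$, but the products $u_iu_j$ are also $(2,2)$-cycles and hence \emph{also} conjugate to $u_2$ in $A_n$. Since $c_ic_j$ is one of the two lifts of $\pi(u_iu_j)$, and any lift of an involution in $\SO(V)$ squares to $(-1)^{m(m+1)/2}$ where $m$ is the dimension of its $(-1)$-eigenspace, it follows that $(c_ic_j)^2 = c_2^2$ automatically. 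Thus \emph{all} of the relations $c_j^2=1$ and $(c_ic_j)^2=1$ collapse to the single condition $c_2^2=1$, i.e.\ $h_\pi \equiv 0 \bmod 4$. Your proposal never makes this reduction; instead it retreats to an unverified claim that $H^2$ of the small subgroups $\langle u_i, u_j\rangle$ is controlled, which is an assertion, not an argument, and would in any case require nontrivial work to justify (it is essentially equivalent to the restriction map $H^2(A_n) \to H^2(\langle s_1s_3 \rangle)$ being injective, a fact proved nowhere in your writeup).

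Your diagnosis of \emph{why} the step is hard is also based on a misreading of the $S_n$ argument. You assert that the $(-1)$-eigenspaces of $\pi(s_1)$ and $\pi(s_3)$ are orthogonal, so that $c_1c_3$ is a product of orthonormal vectors; this is false in general (the regular representation of $S_4$ already gives a counterexample, where $1 - s_1$ and $1 - s_3$ are non-orthogonal $(-1)$-eigenvectors). The $S_n$ proof never needs such orthogonality: it only uses that $c_1c_3$ is \emph{a} lift of $\pi(s_1s_3)$, whence $(c_1c_3)^2$ equals the square of the canonical lift of $\pi(s_1s_3)$, which is $(-1)^{h_\pi(h_\pi+1)/2}$. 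The $A_n$ proof reuses exactly this principle after observing that $u_iu_j$ is conjugate to $u_2$. Your handling of $c_1$ (odd-order cyclic lift) and of the braid relations via connectedness of the generator graph is fine and matches the paper; it is only the commutation-relation step where your proposal is incomplete.
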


\begin{proof}
As in Section \ref{lifting.criteria.subs} we much choose $c_i$ with $\rho(c_i)=\pi(u_i)$ satisfying the same relations as the $u_i$.  Let $c_1$ be a lift of $\pi(u_1)$.  Since $\rho(c_1)^3=\pi(u_1)^3=1$, we have
$c_1^3= \pm 1$.  This determines the sign of $c_1$.

 The $u_j$ and $u_iu_j$ as above, for $j>1$, are all conjugate to $u_2$ in $A_n$.   Therefore all the conditions $c_j^2=1$ and $(c_ic_j)^2=1$ are equivalent to the  condition $c_2^2=1$.  As before, this is equivalent to $h_\pi$ being congruent to $0$ or $3 \mod 4$, but since $h_\pi$ is even, it must be a multiple of $4$. 
 
 Finally, there is a unique choice of signs normalizing $c_2, \ldots, c_{n-2}$ so that 
 \beq
 (c_1c_2)^3=(c_2 c_3)^3= \ldots = 1.
 \eeq
 
 \end{proof}

{\bf Example:} If $\rho$ is the regular representation of $A_n$ (on the group algebra $\R[A_n]$),  then $h_\rho=\dfrac{n!}{4}$, so $\rho$ is spinorial iff  $n \neq 4,5$.

  {\bf Example:} For the standard representation $\pi_n$ of $S_n$, $h_{\pi_n}=2$, so the restriction of $\pi_n$ to $A_n$ is aspinorial.

   \subsection{Real Irreducible Representations}
  
Let us review the relationship between real and complex irreducible representations of a finite group $G$, following \cite{BrokerDieck}.  If $(\pi,V)$ is a complex representation of a group $G$,  write $(\pi_{\R},V_{\R})$ for the realization of $\pi$, meaning that we simply forget the complex structure on $V$ and regard it as a real representation.  If moreover $(\pi,V)$ is an orthogonal complex representation, meaning that it admits a $G$-invariant symmetric nondegenerate bilinear form, then there is a unique real representation $(\pi_0,V_0)$, up to isomorphism, so that $\pi \cong \pi_0 \otimes_{\R} \C$.

It is not hard to see that $\pi_0$ is self-dual iff $\pi$ is self-dual, and that an orthogonal $\pi$ is spinorial, i.e., lifts to $\Spin(V)$, iff $\pi_0$ is spinorial, i.e., lifts to $\Spin(V_0)$.

 
 Every real irreducible representation $\sigma$ of $G$ is either of the form
 \begin{enumerate}
 \item $\sigma=\pi_0$, for an orthogonal irreducible complex representation $\pi$ of $G$, or
 \item $\sigma=\pi_\R$, for an irreducible complex representation $\pi$ of $G$ which is not orthogonal.
 \end{enumerate}

  In the case of $G=S_n$, all complex representations are orthogonal. 
  
    \subsection{Real Irreducible Representations of $A_n$}
  
For a partition $\la$, write $\la'$ for its conjugate partition.  Furthermore write $\epsilon_\la=1$ when the number of cells in the Young diagram of $\la$ above the diagonal is even, and $\epsilon_\la=-1$ when this number is odd.

For example, let $\lambda=(4,3,2,1)$.  Then $\la=\la'$, and there are $4$ cells above the diagonal, shaded in the Young diagram below, so $\epsilon_{\lambda}=1$. 
\begin{center}
\begin{ytableau}
*(white) & *(gray)  &*(gray) &*(gray) \\
*(white)  & *(white)  & *(gray) \\
*(white) & *(white)  \\
*(white)
\end{ytableau}
\end{center}

Let $\sigma_\la$ be the (real) Specht module corresponding to $\la$, as before.  Write $\pi_\la=\sigma_\la \otimes \C$ for its complexification, i.e., the complex Specht module corresponding to $\la$.

If $\la \neq \la'$, then $\pi_\la$ restricts irreducibly to $A_n$.
When $\la=\la'$, the restriction of $\pi_\la$ to $A_n$ decomposes into a direct sum of two nonisomorphic representations $\pi_\la^+$ and $\pi_\la^-$.  Either of $\pi_\la^{\pm}$ is the twist of the other by $\sigma_\la(w)$ for any odd permutation $w$.  
The set of $\pi_\la$ with $\la \neq \la'$, together with the $\pi_\la^{\pm}$ for $\la=\la'$, is a complete set of irreducible complex representations of $A_n$.

For $\la=\la'$ we have  \begin{equation} \label{half.chi}
  \chi_{\la}^+(s_1 s_3)=\chi_{\la}^-(s_1 s_3)=\half  \chi_{\la}(s_1 s_3).
  \end{equation}

If moreover $\epsilon_\la=1$, then the representations $\pi_\la^+$ and $\pi_\la^-$ are orthogonal.  We may then define real irreducible representations of $A_n$ by $\sigma_\la^{\pm}=(\pi_\la^{\pm})_0$.
  However when $\epsilon_\la=-1$, the representations $\pi_\la^{\pm}$ are not orthogonal, and therefore the realizations $(\pi_\la^{\pm})_\R$ are irreducible.  Since
  \beq
  \begin{split}
  (\pi_\la^+)_\R \oplus (\pi_\la^-)_\R & \cong (\sigma_\la|_{A_n} \otimes \C)_{\R} \\ 
   & \cong \sigma_\la|_{A_n} \oplus \sigma_\la|_{A_n}, \\
   \end{split}
  \eeq
   we have isomorphisms of real $A_n$-representations:
  \beq
  (\pi_\la^+)_\R \cong (\pi_\la^-)_\R \cong \sigma_\la|_{A_n}.
  \eeq

 From these considerations and Theorem \ref{spinAn}  we conclude:
  
  \begin{theorem} \label{irred.spin.an} A complete list of real irreducible representations of $A_n$ is given by
  \begin{enumerate}
  \item $\sigma_\la|_{A_n}$, where either $\la \neq \la'$, or $\la =\la'$ and $\epsilon_\la=-1$, and
  \item $\sigma_\la^{\pm}$, where $\la=\la'$ and $\epsilon_\la=1$.
  \end{enumerate}
  
  In the first case,  $\sigma_\la|_{A_n}$ is spinorial iff $\chi_\la(s_1s_3) \equiv \chi_\la(1) \mod 8$.  In the second case, $\sigma_\la^+$ is spinorial iff $\sigma_\la^-$ is spinorial iff $\chi_\la(s_1s_3) \equiv \chi_\la(1) \mod 16$.  \end{theorem}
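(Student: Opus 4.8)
The plan is to deduce everything from the spinoriality criterion of Theorem~\ref{spinAn} — a real representation $\pi$ of $A_n$ is spinorial iff $4 \mid h_\pi$, where $h_\pi = \half(\chi_\pi(1) - \chi_\pi(s_1 s_3))$ — together with the classification of real irreducibles already assembled above. The classification immediately yields the ``complete list'' assertion: a real irreducible of $A_n$ is either $\pi_0$ for an orthogonal complex irreducible $\pi$, or $\pi_\R$ for a non-orthogonal one, and running through the complex irreducibles $\pi_\la$ ($\la \neq \la'$, one per conjugate pair since $\pi_{\la'}|_{A_n} \cong \pi_\la \otimes \sgn|_{A_n} \cong \pi_\la|_{A_n}$) and $\pi_\la^{\pm}$ ($\la = \la'$, which is orthogonal exactly when $\epsilon_\la = 1$) produces precisely the two families listed, where in the case $\la = \la'$, $\epsilon_\la = -1$ we use the identification $(\pi_\la^{\pm})_\R \cong \sigma_\la|_{A_n}$ from above. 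It then remains to compute $h_\pi$ in each case.

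For the first family, $\pi = \sigma_\la|_{A_n}$: since $\sigma_\la$ is already real, $\pi$ has character $\chi_\la|_{A_n}$, and as $s_1 s_3 \in A_n$ we get $h_\pi = \half(\chi_\la(1) - \chi_\la(s_1 s_3))$. Theorem~\ref{spinAn} says $\pi$ is spinorial iff $4 \mid h_\pi$, i.e.\ iff $\chi_\la(s_1 s_3) \equiv \chi_\la(1) \bmod 8$, which is the claimed criterion; note it is uniform across $\la \neq \la'$ and $\la = \la'$ with $\epsilon_\la = -1$, consistent with the isomorphism $(\pi_\la^{\pm})_\R \cong \sigma_\la|_{A_n}$ and the additivity $\chi_\la^+ + \chi_\la^- = \chi_\la|_{A_n}$.

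For the second family, $\pi = \sigma_\la^{\pm}$ with $\la = \la'$ and $\epsilon_\la = 1$: here $\pi_\la^{\pm}$ is orthogonal, so the real form $\sigma_\la^{\pm} = (\pi_\la^{\pm})_0$ carries exactly the character $\chi_\la^{\pm}$. Hence $\chi_{\sigma_\la^{\pm}}(1) = \half\chi_\la(1)$, and by \eqref{half.chi} also $\chi_{\sigma_\la^{\pm}}(s_1 s_3) = \half\chi_\la(s_1 s_3)$, so
\[
h_{\sigma_\la^{\pm}} = \frac{\tfrac12\chi_\la(1) - \tfrac12\chi_\la(s_1 s_3)}{2} = \frac{\chi_\la(1) - \chi_\la(s_1 s_3)}{4}.
\]
By Theorem~\ref{spinAn}, $\sigma_\la^{\pm}$ is spinorial iff $4 \mid h_{\sigma_\la^{\pm}}$, i.e.\ iff $\chi_\la(s_1 s_3) \equiv \chi_\la(1) \bmod 16$; since this condition involves neither the sign nor anything distinguishing $\pi_\la^+$ from $\pi_\la^-$, $\sigma_\la^+$ is spinorial iff $\sigma_\la^-$ is.

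I expect no serious obstacle: the argument is an assembly of Theorem~\ref{spinAn}, the classification of real irreducibles, and the halving identity \eqref{half.chi}. The one point requiring care is the correct identification of the character of the \emph{real} representation in each case — in particular that $(\pi_\la^{\pm})_0$ has character $\chi_\la^{\pm}$ (not $2\chi_\la^{\pm}$, and not $\chi_\la^+ + \chi_\la^-$) in the orthogonal case — and using \eqref{half.chi} for the value at $s_1 s_3$; the rest is bookkeeping. One should also record the edge case $n = 4$, where $(2,2)$ is the only self-conjugate partition and has $\epsilon_{(2,2)} = -1$, so the second family is empty and no compatibility question arises.
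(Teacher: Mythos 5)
Your proposal is correct and follows essentially the same route as the paper, which proves the theorem by assembling the classification of real irreducibles of $A_n$, the halving identity \eqref{half.chi}, and the criterion of Theorem~\ref{spinAn}. Your explicit computation of $h_{\sigma_\la^{\pm}} = \tfrac{1}{4}(\chi_\la(1)-\chi_\la(s_1s_3))$ and the care taken to identify the character of $(\pi_\la^{\pm})_0$ as $\chi_\la^{\pm}$ (rather than its double or sum) is exactly the content the paper leaves implicit in the phrase ``from these considerations.''
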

  
{\bf Remark:}  When  $\la=\la'$ the restriction  $\sigma_\la|_{A_n}$ is necessarily spinorial by \eqref{half.chi}, since all $h_\pi$ are even.

 \section{Tables}  \label{table.section} 
  
We illustrate the theory of this paper by means of two tables.   Table 1 below contains the following information for $2 \leq n \leq 6$:

\begin{enumerate}
\item Whether the Specht Module $\sigma_\lambda$ is chiral, i.e., whether $g_\la$ is odd.
\item Whether $\sigma_\lambda$ is spinorial, by Theorem \ref{tfae.spin}.
\item Whether the restriction of $\sigma_\lambda$ to $A_n$ is spinorial, by Theorem \ref{irred.spin.an}.
\end{enumerate}
  
Table 2 below lists for self-conjugate $\la$ with $\epsilon_\lambda=1$, whether the constituents $\sigma_\la^+$ and $\sigma_\la^-$ are spinorial, following Theorem \ref{irred.spin.an}.  This is done for all such $\la$ with $3 \leq |\la| \leq 15$.
 
 
\begin{table}[ht]
\caption{Spinoriality/Chirality of $\sigma_\la$ with $2 \leq |\la| \leq 6$}
\centering

\tabcolsep=.5cm
\begin{tabular}{|p{1cm}|p{1.3cm}|p{1.7cm}|p{1.7cm}|}
\hline
$\lambda$ & Chirality of $\sigma_{\lambda}$  & Spinoriality of $\sigma_{\lambda}$ & Spinoriality of $\sigma_{\lambda}|_{A_n}$  \\
\hline
\multicolumn{4}{|c|}{$|\la|=2$}\\
\hline  
$(2)$      & achiral    & spinorial & spinorial     \\  
\hline  
$(1^2)$      & chiral    & aspinorial & spinorial     \\
\hline
\multicolumn{4}{|c|}{$|\la|=3$}\\
\hline  
$(3)$      & achiral    & spinorial & spinorial     \\  
		\hline  
		$(2,1)$      & chiral    & aspinorial & spinorial      \\
		\hline
		$(1^3)$      & chiral    & aspinorial & spinorial     \\
\hline
		\multicolumn{4}{|c|}{$|\la|=4$}\\
		\hline
		$(4)$      & achiral    & spinorial &  spinorial    \\
		\hline
		$(3,1)$    & chiral    & aspinorial &  aspinorial     \\
		\hline
		$(2,2)$    & chiral    & aspinorial   & spinorial      \\
		\hline
		$(2,1^2)$  & achiral   & aspinorial   & aspinorial       \\
		\hline
		$(1^4)$    & chiral   & aspinorial  & spinorial      \\
		\hline
\multicolumn{4}{|c|}{$|\la|=5$}\\
 \hline
 $(5)$      & achiral    & spinorial & spinorial    \\
 \hline
 $(4,1)$    & chiral    & aspinorial &  aspinorial     \\
 \hline
 $(3,2)$    & achiral    & aspinorial   &  aspinorial      \\
 \hline
 $(3,1^2)$  & chiral   & spinorial   & spinorial       \\
 \hline
 $(2^2,1)$    & chiral   & aspinorial  & aspinorial      \\
 \hline
 $(2,1^3)$    & chiral   & aspinorial  & aspinorial    \\
 \hline
 $(1^5)$    & chiral   & aspinorial  & spinorial       \\
 \hline
\multicolumn{4}{|c|}{$|\la|=6$}\\
 \hline
 $(6)$      & achiral    & spinorial & spinorial     \\
 \hline
 $(5,1)$    & chiral    & aspinorial & aspinorial     \\
 \hline
 $(4,2)$    & chiral    & aspinorial   & spinorial    \\
 \hline
 $(4,1^2)$  & achiral   & aspinorial   & aspinorial     \\
 \hline
 $(3^2)$    & achiral   & aspinorial  & aspinorial      \\
 \hline
 $(3,2,1)$    & achiral   & spinorial  & spinorial       \\
 \hline
 $(3,1^3)$    & achiral   & aspinorial  & aspinorial   \\
 \hline
 $(2^3)$    & chiral   & aspinorial  & aspinorial       \\
 \hline
 $(2^2,1^2)$    & achiral   & aspinorial  & spinorial     \\ 
 \hline
 $(2,1^4)$    & achiral   & aspinorial  & aspinorial     \\ 
 \hline
 $(1^6)$    & chiral   & aspinorial  & spinorial     \\
 \hline
\end{tabular}
\label{exam2}

\end{table}


\newpage

\begin{table}[ht]
	\caption{Spinoriality of $\sigma_\la^{\pm}$ with $\la=\la'$, $\epsilon_\la=1$, and $3 \leq |\la| \leq 15$}
	\centering	

\tabcolsep=.5cm
\begin{tabular}{|c|c|c|}
 \hline
 $\lambda$  &   $|\la|$ &   $\sigma_\la^\pm$     \\
\hline
 $(3,1,1)$   & $5$    &   aspinorial\\  
\hline
$(3,2,1)$      & $6$    &   spinorial \\   
\hline
 $(5,1^4)$    & $9$     &   spinorial \\
\hline
$(5,2,1^3)$ &   $10 $ &    spinorial \\
\hline
$(4,3,2,1)$  &   $10$ &     spinorial\\
\hline
$(4,3,3,1)$ & $11$ & aspinorial \\
\hline
$(7,1^6)$ & $13$ & spinorial \\
\hline
$(7,2,1^5)$ & $14$ & spinorial \\
\hline
$(6,3^2,1^3)$ & $15$ & spinorial \\
\hline
$(5,4,3,2,1)$ & $15$ & spinorial \\
\hline
$(4^3,3)$ & $15$ & spinorial \\
\hline
\end{tabular}
\label{exam3}
\end{table}

  \section{Stiefel-Whitney Classes} \label{SW.section}

  \subsection{Basic Properties}
  Let $G$ be a finite group and $\pi$ a real representation of $G$.  Stiefel-Whitney classes $w_i(\pi)$ are defined for $0 \leq i \leq \deg \pi$ as members of the cohomology groups $H^i(G)=H^i(G,\Z/2\Z)$.  Here $\Z/2\Z$ is trivial as a $G$-module.   
 One considers the total Stiefel-Whitney class in the $\Z/2\Z$-cohomology ring:
  \beq
  w(\pi)=w_0(\pi)+ w_1(\pi)+ \cdots +w_d(\pi) \in H^*(G)=\bigoplus_{i=0}^\infty H^i(G),
  \eeq
  where $d=\deg \pi$.
  
  According to, for example  \cite{GKT}, these characteristic classes satisfy the following properties:
  \begin{enumerate}
  \item $w_0(\pi)=1$.
  \item $w_1(\pi)=\det \pi$, regarded as a linear character in $H^1(G) \cong \Hom(G, \{\pm 1\})$.
  \item If $\pi'$ is another real representation, then $w(\pi \oplus \pi')=w(\pi) \cup w(\pi')$.
  \item \label{incl.w} If $f: G' \to G$ is a  group homomorphism, then $w(\pi \circ f)=f^*(w(\pi))$, where $f^*$ is the induced map on cohomology.
  \item \label{here}  Suppose $\det \pi=1$.  Then $w_2(\pi)=0$ iff $\pi$ is spinorial.
  \end{enumerate}
  
 Note in particular that $w(\chi)=1+\chi$, if $\chi$ is a linear character of $G$.
  
  The last property generalizes as follows:
  \begin{prop} \label{spin.crit.chiral} A real representation $\pi$ is spinorial iff $w_2(\pi)=w_1(\pi) \cup w_1(\pi)$.
  \end{prop}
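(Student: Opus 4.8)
The plan is to reduce the general (possibly non-orthogonal-determinant) case to the case of $\SO(V)$ treated in property \eqref{here} by twisting $\pi$ by a suitable linear character. First I would recall that $w_1(\pi) = \det\pi$ is a linear character of $G$, i.e.\ an element $\chi \in \Hom(G,\{\pm 1\})$, which may be regarded simultaneously as a one-dimensional real representation of $G$. The key observation is that $\pi$ is spinorial if and only if $\pi \oplus \chi$ is spinorial: one direction is immediate because a one-dimensional representation always lifts to $\Pin(V)$ (its image is a single reflection or the identity, which has an obvious preimage of order dividing $4$), and conversely the restriction to a direct summand of a spinorial representation is spinorial --- more precisely, a lift of $\pi \oplus \chi$ to $\Pin$ restricts, via the inclusion of Clifford algebras, to a lift of each summand. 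Actually it is cleaner to argue that $\pi$ is spinorial iff $\pi \oplus \chi$ is, since $\det(\pi \oplus \chi) = (\det \pi)\cdot \chi = \chi^2 = 1$, so $\pi \oplus \chi$ maps into $\SO(V \oplus \R)$ and property \eqref{here} applies to it directly.

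Next I would compute $w_2(\pi \oplus \chi)$ using the Whitney sum formula (property (3)) together with $w(\chi) = 1 + w_1(\chi) = 1 + \chi$ and $w_1(\chi) = \chi = w_1(\pi)$. Writing out the degree-$2$ part of $w(\pi) \cup w(\chi)$ gives
\[
w_2(\pi \oplus \chi) = w_2(\pi) + w_1(\pi)\cup w_1(\chi) + w_0(\pi)\cup w_2(\chi) = w_2(\pi) + w_1(\pi)\cup w_1(\pi),
\]
since $w_2(\chi) = 0$ for a one-dimensional representation. By property \eqref{here} applied to $\pi \oplus \chi$, this class vanishes iff $\pi \oplus \chi$ is spinorial, and by the previous paragraph that holds iff $\pi$ is spinorial. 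Combining, $\pi$ is spinorial iff $w_2(\pi) + w_1(\pi)\cup w_1(\pi) = 0$ in $H^2(G,\Z/2\Z)$, which (as we are in characteristic $2$) is exactly the asserted identity $w_2(\pi) = w_1(\pi) \cup w_1(\pi)$.

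The main obstacle is the clean justification that $\pi$ is spinorial iff $\pi \oplus \chi$ is. The ``only if'' direction needs a careful statement: given $\hat\pi : G \to \Pin(V)$ lifting $\pi$, one must exhibit a lift of $\pi \oplus \chi$; this is done by choosing a unit vector $e$ spanning the added $\R$ factor and mapping $g \mapsto \hat\pi(g)$ if $\chi(g) = 1$ and $g \mapsto \hat\pi(g)\cdot e$ if $\chi(g) = -1$ --- one then checks this is a homomorphism into $\Pin(V \oplus \R e)$ by a short Clifford-algebra computation using that $e$ anticommutes with odd elements and commutes with even ones, and that the sign ambiguity in $\hat\pi(g)$ is harmless. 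The ``if'' direction similarly extracts a lift of $\pi$ from one of $\pi \oplus \chi$ by projecting. Since these verifications are elementary manipulations in $C(V)$ of the type already used freely in Section \ref{lifting.criteria.subs}, I would present them briefly rather than in full detail. An alternative, slicker route avoiding Clifford computations altogether is to invoke property \eqref{incl.w} with the diagonal-type map and the known behavior of $w_2$ under adding trivial/line summands, but the direct argument above is self-contained given what the excerpt has established.
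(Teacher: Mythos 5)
Your final argument takes essentially the same route as the paper: the key lemma is that $\pi$ is spinorial iff $\pi \oplus \det\pi$ is, and your explicit lift sending $g$ to $\hat\pi(g)$ or $\hat\pi(g)\cdot e$ according to the sign of $\det\pi(g)$ is precisely the paper's map $\tilde\varphi \circ \hat\pi$; the Whitney-sum calculation $w_2(\pi \oplus \det\pi) = w_2(\pi) + w_1(\pi)\cup w_1(\pi)$ then finishes things off exactly as in the paper.

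However, both of the initial justifications you offer for the key lemma are false, and you should delete them. A one-dimensional representation need not lift to $\Pin$: over a cyclic group $C$ of order $2$, the character $\sgn$ has image the reflection $-1 \in \Or(\R)$, whose two preimages in $\Pin(\R)$ each have order $4$, so there is no lift (this is consistent with the paper's Section on the group of order $2$, where $\sgn$ is aspinorial because $n=1 \not\equiv 0, 3 \mod 4$). Likewise a direct summand of a spinorial representation need not be spinorial: over $C$, the representation $3\cdot\sgn$ is spinorial while its summand $2\cdot\sgn$ is not. Both failures trace to the same obstruction: the map $\Pin(V) \times \Pin(W) \to \Pin(V \oplus W)$, $(x,y) \mapsto \iota_V(x)\iota_W(y)$, is not a homomorphism, because $C(V \oplus W)$ is only a \emph{graded} tensor product of $C(V)$ and $C(W)$. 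The construction that does work (and that you eventually write down) avoids this precisely by using the parity of $\hat\pi(g)$ in $C(V)$ to decide whether to insert the factor $e$; it is not a product of two independent lifts. Finally, the ``if'' direction (``extracting a lift by projecting'') is a genuine gap as stated: the paper closes it by proving that $\tilde\varphi : \Pin(V) \to \Spin(V')$ is injective with image exactly $(\rho')^{-1}(\im\varphi)$, where $\varphi : \Or(V) \to \SO(V')$ sends $g \mapsto g \oplus \det g$; any lift of $\pi \oplus \det\pi$ therefore lands in $\im\tilde\varphi$ and can be composed with $\tilde\varphi^{-1}$ to produce a lift of $\pi$. Something of that shape, not mere projection, is needed.
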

  
  We will deduce this proposition from the following lemma.
  \begin{lemma} Let $\pi'=\pi \oplus \det \pi$.  Then $\pi$ is spinorial iff $\pi'$ is spinorial.
  \end{lemma}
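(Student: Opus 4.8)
The plan is to reduce the lifting question for $\pi$ to the already-settled case of a representation with trivial determinant, using the lemma.  First I would observe that $\pi' = \pi \oplus \det\pi$ has $\det \pi' = (\det\pi)^2 = 1$, so $\pi'$ lands in $\SO(V')$ and property \eqref{here} applies to it: $\pi'$ is spinorial iff $w_2(\pi') = 0$.  On the other hand, the Whitney sum formula (property (3)) gives
\beq
w(\pi') = w(\pi) \cup w(\det\pi) = w(\pi)\cup(1 + w_1(\pi)),
\eeq
and comparing degree-$2$ components yields $w_2(\pi') = w_2(\pi) + w_1(\pi)\cup w_1(\pi)$ (using $w_1(\pi') $ or lower-degree terms as needed; the degree-$1$ piece is $w_1(\pi) + w_1(\pi) = 0$, consistent with $\det\pi' = 1$).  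Since we are in characteristic $2$, $w_2(\pi') = 0$ is equivalent to $w_2(\pi) = w_1(\pi)\cup w_1(\pi)$.  Combining these three facts with the lemma — $\pi$ spinorial $\iff$ $\pi'$ spinorial — gives exactly the claimed equivalence.

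The only real content beyond bookkeeping is the lemma, which I would prove as follows.  A representation is spinorial iff its composition with $\rho: \Pin(V) \to \Or(V)$ admits a section over $G$; equivalently, the pullback along $\pi$ of the central extension $1 \to \Z/2\Z \to \Pin(V) \to \Or(V) \to 1$ splits.  Twisting by a one-dimensional orthogonal representation $\det\pi$ does not change the relevant obstruction class, because the copy of $\det\pi$ contributes a single reflection (or the identity) on that line, and one can choose its lift in $\Pin(V')$ consistently: concretely, if $\{x_i\}$ are lifts of generators $\pi(s_i)$ witnessing spinoriality of $\pi$, then multiplying each $x_i$ by the canonical lift of the reflection $\det\pi(s_i) \in \Or(\R)$ — which squares to $\pm 1$ in a controlled way — produces lifts for $\pi'$, and conversely.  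In the symmetric-group setting one can even argue via Theorem \ref{tfae.spin}: passing from $\pi$ to $\pi \oplus \sgn$ changes $g_\pi$ by the parity of $g_{\sgn} \in \{0,1\}$ appropriately and leaves $h_\pi$ unchanged modulo $4$, so the lifting conditions are preserved — but the abstract argument is cleaner and applies to all $G$.

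The main obstacle is making the lemma's proof rigorous without circularity: property \eqref{here} is stated only for $\det\pi = 1$, so the lemma cannot simply invoke it, and one must instead work directly with lifts to $\Pin(V)$ and track how appending a one-dimensional summand affects the freedom in choosing signs of those lifts.  The key point to nail down is that the extra reflection coming from $\det\pi$ can always be lifted compatibly — i.e., that it never introduces a new obstruction — which amounts to checking that the single generator it adds satisfies its own relations in $\Pin(V')$ for a suitable sign choice, and that this choice is independent of the choices made for $\pi$.  Once that compatibility is verified, the rest is the formal cohomological computation sketched above.
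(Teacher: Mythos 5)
Your proposal spends its first half arguing Proposition~\ref{spin.crit.chiral} \emph{from} the Lemma, but the statement to prove here is the Lemma itself. When you do turn to the Lemma, you correctly identify the shape of the argument: take a lift $x \in \Pin(V)$ of $\pi(g)$, embed it into $C(V')$ via $\iota: C(V) \hookrightarrow C(V')$, and append a lift of the extra one-dimensional reflection when $\det\pi(g) = -1$. This is exactly the paper's route, which packages it as a map $\tilde\varphi: \Pin(V) \to \Spin(V')$ with $\tilde\varphi(x) = \iota(x)$ for $x \in \Spin(V)$ and $\tilde\varphi(x) = \iota(x)v'$ otherwise, satisfying $\rho' \circ \tilde\varphi = \varphi \circ \rho$.

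The problem is that your sketch leaves unproved exactly the two points that carry the weight, and you say so yourself. First, that this rescaling produces a genuine homomorphism is not formal bookkeeping: it rests on the graded commutation rule $\iota(x)v' = v'\iota(x)$ for $x \in C(V)^0$ and $\iota(x)v' = -v'\iota(x)$ for $x \in C(V)^1$, without which the products $\tilde\varphi(x_1)\tilde\varphi(x_2)$ do not come out right; nothing in your proposal invokes this. Second, you offer no argument for the converse direction (a lift of $\pi'$ gives a lift of $\pi$). The paper handles this by showing $\tilde\varphi$ is injective (using that $v' \notin \iota(C(V))$) and that its image is the full preimage $(\rho')^{-1}(\Phi_V)$, so every lift of $\pi'$ factors through $\tilde\varphi$ and hence descends; your ``and conversely'' has no content behind it. Finally, the Lemma must hold for an arbitrary finite group $G$, since Proposition~\ref{spin.crit.chiral} is stated in that generality, so the generators-and-relations phrasing in terms of $s_i$, and the fallback via Theorem~\ref{tfae.spin}, cannot substitute for the Clifford-algebra argument. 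In short: same approach as the paper in outline, but the load-bearing verifications are precisely the steps you flagged as ``the key point to nail down'' and did not carry out.
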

  
  \begin{proof}
  Let $V'$ be the representation space of $\pi'$; say $V'=V \oplus \R v'$ for some unit vector $v'$ perpendicular to $V$.   Write $\iota: C(V) \to C(V')$ for the canonical injection.  Note that if $x \in C(V)^0$, then $\iota(x)v'=v' \cdot \iota(x)$, and if $x \in C(V)^1$, then $\iota(x)v'=-v' \cdot \iota(x)$.  Write $\rho': \Pin(V') \to O(V')$ for the usual double cover.

   Define $\varphi: \Or(V) \to \SO(V')$ by
 \beq
 \varphi(g)=g \oplus \det(g),
 \eeq
  so that $\pi'=\varphi \circ \pi$.  Write $\Phi_V< \SO(V')$ for the image of $\varphi$.  The essential problem is to construct a lift of $\varphi \circ \rho$.
  Since $\Spin(V)=\Pin(V) \cap C(V)^0$, the map  $\tilde \varphi: \Pin(V) \to \Spin(V')$ defined by
  \beq
\tilde \varphi(x)=\begin{cases} 
&\iota(x),  \text{ if $x \in \Spin(V)$} \\
&\iota(x)v', \text{ if $x \notin \Spin(V)$} \\
\end{cases}
\eeq
is a group homomorphism.  Note that 
\begin{equation} \label{phi.commutes}
\rho' \circ \tilde \varphi=\varphi \circ \rho.   
\end{equation}

Let us see that $\tilde \varphi$ is injective; suppose $\tilde \varphi(x_1)=\tilde \varphi(x_2)$.  Clearly if   $x_1$ and $x_2$ are both in $\Spin(V)$, or both not in $\Spin(V)$, then $x_1=x_2$.  Suppose 
$x_1 \in \Spin(V)$ but $x_2 \notin \Spin(V)$.  Then $v' =\iota(x_2^{-1}x_1)$ and in particular $v' \in \iota(C(V))$.  But this is impossible, say by Corollary 6.7 in Chapter I of \cite{BrokerDieck}.  We conclude that $\tilde \varphi$ is injective.
 
  Write $\tilde \Phi_V< \Spin(V')$ for the image of $\tilde \varphi$; then $\tilde \varphi$ is an isomorphism from $\Pin(V)$ onto $\tilde \Phi_V$ and
\beq
(\rho')^{-1}\Phi_V=\tilde \Phi_V.
\eeq
   
   If $\hat \pi$ is a lift of $\pi$, then $\tilde \varphi \circ \hat \pi$ is a lift of $\pi'$.  Conversely, suppose $\hat \pi'$ is a lift of $\pi'$. Then its image lies in $\tilde \Phi_V$, and therefore $\hat \pi'=\tilde \varphi \circ \hat \pi$ for some homomorphism $\hat \pi: G \to \Pin(V)$.  Since 
   \beq
   \rho' \circ \hat \pi'= \varphi \circ \pi,
   \eeq
   it follows that
   \beq
   \varphi \circ \rho \circ \hat \pi= \varphi \circ \pi.
   \eeq
  Thus $\hat \pi$ is a lift of $\pi$.

  \end{proof}
  
  \begin{proof} (of Proposition  \ref{spin.crit.chiral})
  Note that $\det \pi'=1$, so $\pi$ is spinorial iff $w_2(\pi')=0$.  But 
  \beq
  \begin{split}
  w(\pi') &=w(\pi) \cup w(\det \pi) \\
  &= (1+ \det \pi+w_2(\pi)+ \cdots) \cup (1+ \det \pi) \\
  &=1 + w_2(\pi) + w_1(\pi) \cup w_1(\pi)+ \cdots, \\
  \end{split}
  \eeq
  whence the theorem.
  \end{proof}

 \subsection{The group of order $2$} \label{C2}
  Let $C$ be a cyclic group of order $2$, and write `$\sgn$' for its nontrivial linear character.  Then $H^2(C) \cong \Z/2\Z$; the nonzero element is `$\sgn \cup \sgn$'.  Let $\pi$ be the sum of $m$ copies of the trivial representation with $n$ copies of $\sgn$.  Then 
  \beq
  \begin{split}
  w(\pi) &= w(\sgn) \cup \cdots \cup w(\sgn) \\
  		&= 1+ n \cdot \sgn + \binom{n}{2} \cdot \sgn \cup \sgn  + \cdots. \\
		\end{split}
		\eeq
In particular, $w_2(\pi)= \binom{n}{2} \cdot \sgn \cup \sgn$.  By Proposition \ref{spin.crit.chiral}, $\pi$ is   spinorial iff $n^2 \equiv \binom{n}{2} \mod 2$;
equivalently, $n \equiv 0$ or $3 \mod 4$.

  \subsection{Calculation for $S_n$}

Write 
\beq
e_{\mathrm{cup}}=w_1(\sgn)\cup w_1(\sgn) =w_2(\sgn \oplus \sgn) \in H^2(S_n).
\eeq

 Again write $\pi_n$ for the standard representation of $S_n$ on $\R^n$.
From \cite[Section $1.5$]{Serrespin} we know that $e_{\mathrm{cup}}$ and $w_2(\pi_n)$ comprise a basis for the $\Z/2\Z$-vector space  $H^2(S_n)$.   
 
\begin{prop} The map
\beq
\Phi: H^2(S_n) \to H^2( \lip s_1 \rip) \oplus H^2( \lip s_1 s_3 \rip),
\eeq
given by the two restrictions, is an isomorphism for $n \geq 4$.
\end{prop}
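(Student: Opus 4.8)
The plan is to reduce to a $2\times 2$ linear algebra check over $\Z/2\Z$. By the cited result of \cite{Serrespin}, the source $H^2(S_n)$ is two-dimensional with basis $\{e_{\mathrm{cup}}, w_2(\pi_n)\}$ (it is here that $n\geq 4$ enters). On the other hand each of $\lip s_1 \rip$ and $\lip s_1 s_3 \rip$ is a group of order $2$ — note $s_1=(1,2)$ and $s_3=(3,4)$ have disjoint support, so $\lip s_1 s_3 \rip$ genuinely has order $2$ — whence each summand of the target is one-dimensional, with nonzero element the cup square of its nontrivial character. Thus source and target are both two-dimensional $\Z/2\Z$-vector spaces, and it suffices to show that $\Phi$ sends the basis $\{e_{\mathrm{cup}}, w_2(\pi_n)\}$ to a basis, i.e.\ that the matrix of $\Phi$ in these bases is invertible.

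First I would compute the two restrictions of $e_{\mathrm{cup}}=w_1(\sgn)\cup w_1(\sgn)$ using the naturality property \eqref{incl.w} of Stiefel-Whitney classes for the inclusions $\lip s_1 \rip \hookrightarrow S_n$ and $\lip s_1 s_3 \rip \hookrightarrow S_n$: the restriction of $w_1(\sgn)=\det$ is simply $\sgn$ pulled back along the inclusion. Since $s_1$ is an odd permutation, $\sgn$ restricts to the nontrivial character of $\lip s_1 \rip$, so $e_{\mathrm{cup}}$ restricts to the nonzero element of $H^2(\lip s_1 \rip)$; since $s_1 s_3$ is even, $\sgn$ restricts trivially to $\lip s_1 s_3 \rip$, so $e_{\mathrm{cup}}$ restricts to $0$ there. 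Hence $\Phi(e_{\mathrm{cup}})=(*,0)$ with $*\neq 0$.

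Next I would compute the two restrictions of $w_2(\pi_n)$, again via \eqref{incl.w}, so that the restriction is $w_2$ of the restricted representation. The permutation $s_1=(1,2)$ acts on $\R^n$ with $(-1)$-eigenspace spanned only by $e_1-e_2$, so $\pi_n|_{\lip s_1 \rip}$ is $(n-1)$ copies of the trivial character plus one copy of $\sgn$; by the computation of Section \ref{C2} its second Stiefel-Whitney class is $\binom{1}{2}\sgn\cup\sgn=0$. The permutation $s_1 s_3=(1,2)(3,4)$ has two-dimensional $(-1)$-eigenspace spanned by $e_1-e_2$ and $e_3-e_4$, so $\pi_n|_{\lip s_1 s_3 \rip}$ is $(n-2)$ trivials plus two copies of $\sgn$, and Section \ref{C2} gives $w_2=\binom{2}{2}\sgn\cup\sgn$, the nonzero element of $H^2(\lip s_1 s_3 \rip)$. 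Thus $\Phi(w_2(\pi_n))=(0,*')$ with $*'\neq 0$.

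Consequently the matrix of $\Phi$ in the chosen bases is the identity, so $\Phi$ is an isomorphism. There is no deep obstacle here; the points requiring care are (i) keeping track of the parity of $s_1$ versus $s_1 s_3$, which is precisely what makes the two restrictions of $e_{\mathrm{cup}}$ behave oppositely, and (ii) correctly reading off the multiplicities ($1$ and $2$) of $\sgn$ in the two restrictions of $\pi_n$ so as to feed the binomial formula of Section \ref{C2}. One also notes that the standing hypothesis $n\geq 4$ is exactly what makes $\dim_{\Z/2\Z} H^2(S_n)=2$, so that the dimension count in the first paragraph is legitimate.
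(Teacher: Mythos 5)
Your argument is correct and follows essentially the same route as the paper: compute the images of the basis elements $e_{\mathrm{cup}}$ and $w_2(\pi_n)$ under the two restrictions, observe they land on a basis of the target, and conclude $\Phi$ has full rank. The only difference is that you spell out the intermediate computations (parity of $s_1$ vs.\ $s_1s_3$ for $e_{\mathrm{cup}}$, and the $\binom{1}{2}$ vs.\ $\binom{2}{2}$ evaluation from the order-$2$ case) that the paper leaves implicit.
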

Thus the second $\Z/2\Z$-cohomology of $S_n$ is ``detected" by these cyclic subgroups; compare Corollary   \ref{rest.spin.prop} above and Theorem VI.1.2 in \cite{Milgram}.

\begin{proof} 
Since $\Phi$ is a linear map between $2$-dimensional $\Z/2\Z$-vector spaces, it suffices to prove that its rank is  $2$.
Let $b_1$ be the generator of $H^2( \lip s_1 \rip)$, and $b_2$ be the generator of $H^2( \lip s_1 s_3 \rip)$.

The restriction of $\pi_n$ to $\lip s_1 \rip$ decomposes into a trivial $(n-1)$-dimensional representation plus one copy of $\sgn$.   The restriction to $\lip s_1 s_3 \rip$ contains two copies of $\sgn$.  Therefore $\Phi(w_2(\pi_n))=(0,b_2)$.  Similarly $\Phi(w_2(\sgn \oplus \sgn))=   \Phi(e_{\mathrm{cup}})=      (b_1,0)$.  Thus $\Phi$ has rank $2$, as required.

\end{proof}

\begin{theorem}  \label{seven.point.two} 
	For $\pi$ a real representation of $S_n$,  with $n \geq 4$, we have
	\beq
\begin{split}
	w_2(\pi) &=\left[\frac{g_\pi}{2}\right]e_{\mathrm{cup}}+\frac{h_\pi}{2}w_2(\pi_n) \\
	 &=\left[\frac{\chi_V(1)-\chi_V(s_1)}{4}\right]e_{\mathrm{cup}}+\frac{\chi_{V}(1)-\chi_V(s_1s_3)}{4}w_2(\pi_n). \\
	 \end{split}
	 \eeq
\end{theorem}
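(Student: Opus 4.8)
The plan is to verify the formula by evaluating both sides after restriction to the two cyclic subgroups $\lip s_1 \rip$ and $\lip s_1 s_3 \rip$, which suffices because the restriction map $\Phi$ of the preceding proposition is an isomorphism onto $H^2(\lip s_1\rip)\oplus H^2(\lip s_1 s_3\rip)$. Since $H^2(S_n)$ is spanned by $e_{\mathrm{cup}}$ and $w_2(\pi_n)$, and $\Phi(e_{\mathrm{cup}})=(b_1,0)$, $\Phi(w_2(\pi_n))=(0,b_2)$, it is enough to show that the first component of $\Phi(w_2(\pi))$ is $\left[\tfrac{g_\pi}{2}\right] b_1$ and the second is $\tfrac{h_\pi}{2}b_2$. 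The equivalence of the two displayed expressions is then immediate from the definitions $g_\pi=\tfrac{\chi_\pi(1)-\chi_\pi(s_1)}{2}$ and $h_\pi=\tfrac{\chi_\pi(1)-\chi_\pi(s_1s_3)}{2}$.

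First I would compute the restriction of $\pi$ to $\lip s_1 \rip \cong C_2$. By definition $g_\pi$ is the multiplicity of $-1$ as an eigenvalue of $\pi(s_1)$, so $\pi|_{\lip s_1\rip}$ is $(d-g_\pi)$ copies of the trivial character plus $g_\pi$ copies of $\sgn$. By the computation in Section \ref{C2}, $w_2$ of such a representation is $\binom{g_\pi}{2}\cdot(\sgn\cup\sgn)$, and $\binom{g_\pi}{2}\equiv \left[\tfrac{g_\pi}{2}\right]\bmod 2$ (this is the standard parity fact: $\binom{g}{2}$ is odd iff $g\equiv 2,3\bmod 4$, iff $\lfloor g/2\rfloor$ is odd). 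Using naturality (property \ref{incl.w}), $\Phi$ sends $w_2(\pi)$ in the first coordinate to $w_2(\pi|_{\lip s_1\rip})=\left[\tfrac{g_\pi}{2}\right]b_1$, since $b_1=\sgn\cup\sgn$ is the generator of $H^2(\lip s_1\rip)$.

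Next I would do the same for $\lip s_1 s_3 \rip \cong C_2$. Here $h_\pi$ is the multiplicity of $-1$ as an eigenvalue of $\pi(s_1 s_3)$, so $\pi|_{\lip s_1 s_3\rip}$ is $(d-h_\pi)$ trivials plus $h_\pi$ copies of $\sgn$, giving $w_2(\pi|_{\lip s_1 s_3\rip})=\binom{h_\pi}{2}b_2$. But $h_\pi$ is even (shown in the lemma of Section \ref{lifting.criteria.subs}), so $\binom{h_\pi}{2}=\tfrac{h_\pi}{2}(h_\pi-1)\equiv \tfrac{h_\pi}{2}\bmod 2$ since $h_\pi-1$ is odd. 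Hence the second coordinate of $\Phi(w_2(\pi))$ is $\tfrac{h_\pi}{2}b_2$. Combining the two coordinates and applying $\Phi^{-1}$, with $\Phi^{-1}(b_1,0)=e_{\mathrm{cup}}$ and $\Phi^{-1}(0,b_2)=w_2(\pi_n)$, yields $w_2(\pi)=\left[\tfrac{g_\pi}{2}\right]e_{\mathrm{cup}}+\tfrac{h_\pi}{2}w_2(\pi_n)$.

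I do not anticipate a serious obstacle: the argument is essentially bookkeeping once the isomorphism $\Phi$ and the $C_2$-calculation of Section \ref{C2} are in hand. The one point requiring a little care is the reduction of the binomial coefficients modulo $2$ — in particular matching $\binom{g_\pi}{2}\bmod 2$ with $\left[\tfrac{g_\pi}{2}\right]$ for arbitrary $g_\pi$, and using the evenness of $h_\pi$ to simplify $\binom{h_\pi}{2}$ — but these are elementary parity checks. One should also note explicitly that $\Phi$ being an isomorphism means equality can be checked coordinatewise, so no additional cohomological input beyond the previous proposition is needed.
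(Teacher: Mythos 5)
Your proposal is correct, and it follows the same essential route as the paper: restrict to the two cyclic subgroups, use that $\Phi$ is an isomorphism, invoke naturality of $w_2$, and apply the $C_2$ calculation of Section \ref{C2}. The one genuine (if minor) difference is that you compute $w_2$ of the restrictions directly as $\binom{g_\pi}{2}b_1$ and $\binom{h_\pi}{2}b_2$ and then reduce the binomial coefficients mod $2$, whereas the paper instead detects the coordinates $c_1,c_2$ by whether the restrictions are \emph{spinorial} (using property~(5), which requires the determinant to be trivial); this forces the paper to first assume $\pi$ achiral and then handle the chiral case separately by passing to $\pi\oplus\sgn$ and using $w_2(\pi)=w_2(\pi\oplus\sgn)+e_{\mathrm{cup}}$. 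Your direct computation avoids that case split entirely, since $\binom{g}{2}\equiv\lfloor g/2\rfloor\bmod 2$ holds for all $g$, even or odd, and the evenness of $h_\pi$ handles the second coordinate uniformly. Both arguments rest on the same ingredients; yours is slightly more streamlined in the bookkeeping.
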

Here $[ \cdot  ]$ denotes the greatest integer function.
  
\begin{proof} Suppose first that $\pi$ is achiral.  Since $e_{\mathrm{cup}}$ and $w_2(\pi_n)$ form a basis of $H^2(S_n)$ we must have
 \beq
 w_2(\pi) =c_1 e_{\mathrm{cup}}+c_2 w_2(\pi_n),
 \eeq
for some $c_1,c_2 \in \Z/2\Z$.  Thus $\Phi(w_2(\pi))=c_1b_1+c_2b_2$.
By the Stiefel-Whitney class properties (\ref{incl.w})  and (\ref{here}),
\beq
c_1=0 \Leftrightarrow \pi|_{\lip s_1 \rip} \text{ is spinorial } \Leftrightarrow 4 | g_\pi
\eeq
and
\beq
 c_2=0 \Leftrightarrow \pi|_{\lip s_1s_3 \rip} \text{ is spinorial } \Leftrightarrow 4 | h_\pi.
\eeq

Thus $c_1 \equiv \frac{g_\pi}{2}  \mod 2$ and $c_2 \equiv \frac{h_\pi}{2}  \mod 2$.

 If $\pi$ is chiral, then $\pi'=\pi \oplus \sgn$ is achiral.  From the identity $w_2(\pi)=w_2(\pi')+e_{\mathrm{cup}}$, we deduce that
  \beq
 w_2(\pi) =\frac{g_\pi-1}{2} e_{\mathrm{cup}}+\frac{h_\pi}{2}w_2(\pi_n).
 \eeq
\end{proof}

{\bf Remark:} For $n=2,3$, similar reasoning gives $H^2(S_n) \cong H^2(\lip s_1 \rip)$ and
\beq
	w_2(\pi) =\left[\frac{g_\pi}{2}\right]e_{\mathrm{cup}} =\left[\frac{\chi_V(1)-\chi_V(s_1)}{4}\right]e_{\mathrm{cup}}.
\eeq

{\bf Remark:} Since the groups $H^2(A_n,\Z/2\Z)$ have order $1$ or $2$,  computing the Stiefel-Whitney class of a real representation of $A_n$ is equivalent to determining its spinoriality, which we have already done.

\section{Products} \label{products.section}

Spinoriality for representations of $S_n \times S_{n'}$ can also be determined by means of generators and relations.  (See  Theorem 5.4.1 in \cite{Jyotirmoy}.)  However we will instead obtain a satisfactory criterion by simply feeding our calculation of $w_2(\pi)$ into the machinery of Stiefel-Whitney classes.

  \subsection{External tensor products}  \label{six.point.two}
  
  Let $G,G'$ be finite groups, let $(\pi,V)$ be a real representation of $G$, and let $(\pi',V')$ be a real representation of $G'$.  Write $\pi \boxtimes \pi'$ for the external tensor product representation of $G \times G'$ on $V \otimes V'$.  One computes
  \beq
  \det(\pi \boxtimes \pi')=\det(\pi)^{\deg \pi'} \cdot \det(\pi')^{\deg \pi},
  \eeq
  and hence
  \beq
  w_1(\pi \boxtimes \pi')=\deg \pi' \cdot w_1(\pi)+ \deg \pi \cdot w_1(\pi'),
  \eeq
which is  an element of
  
\beq
  H^1(G \times G') \cong H^1(G) \oplus H^1(G').
  \eeq
  
  The famous ``splitting principle" (e.g., proceeding as in Problem 7-C of \cite{mch}) similarly gives
  \beq
  \begin{split}
  w_2(\pi \boxtimes \pi') & = \deg \pi' \cdot w_2(\pi)+ \binom{\deg \pi'}{2} w_1(\pi) \cup w_1(\pi) + (\deg \pi \deg \pi'-1) w_1(\pi)   \otimes w_1(\pi') \\
  &+ \binom{\deg \pi}{2} w_1(\pi') \cup w_1(\pi')+ \deg \pi \cdot w_2(\pi'),\\
  \end{split}
  \eeq
  as an element of 
  \beq
  H^2(G \times G') \cong H^2(G) \oplus \left( H^1(G) \otimes H^1(G') \right) \oplus H^2(G').
  \eeq

   Finally, $w_2(\pi \boxtimes \pi') + w_1(\pi \boxtimes \pi') \cup w_1(\pi \boxtimes \pi')$ comes out to be
 \beq
 \begin{split}
 \deg \pi' \cdot w_2(\pi) &+ \binom{\deg \pi'+1}{2} w_1(\pi) \cup w_1(\pi) + (\deg \pi \deg \pi' +1)w_1(\pi) \otimes w_1(\pi')   \\
 &+ \binom{\dim \pi+1}{2} w_1(\pi') \cup w_1(\pi')+ \deg \pi \cdot w_2(\pi').\\
 \end{split}
 \eeq
 Thus $\pi \boxtimes \pi'$ is spinorial (by Proposition \ref{spin.crit.chiral}) iff  all of the following vanish:
\begin{enumerate}
\item  $\deg \pi' \cdot w_2(\pi)+ \binom{\deg \pi'+1}{2} w_1(\pi) \cup w_1(\pi)$,
\item $(\deg \pi \deg \pi' +1)w_1(\pi) \otimes w_1(\pi')$, and
\item $ \binom{\deg \pi +1}{2} w_1(\pi') \cup w_1(\pi')+ \deg \pi \cdot w_2(\pi')$.
\end{enumerate}

\subsection{Products of Symmetric Groups}

We now prove   Theorem \ref{prod.thm.intro}.  Let $\pi,\pi'$ be representations of $S_n$ and $S_{n'}$.   Write $f=f_\pi$, $f'=f_{\pi'}$ and similarly for $g,h,g'$ and $h'$. Let $\Pi=\pi \boxtimes \pi'$; all representations of $S_n \times S_{n'}$ are sums of such representations.

\begin{proof}
 
 By Proposition \ref{spin.crit.chiral},  $\Pi$ is spinorial iff 
 \beq
 w_2(\Pi) = w_1(\Pi) \cup w_1(\Pi).
 \eeq
 From Theorem \ref{seven.point.two} and (1)-(3) of Section \ref{six.point.two} we deduce that $\Pi$ is spinorial iff all of the following are even:
 \begin{enumerate}
  \item $f' \cdot \frac{h}{2}$,
 \item $f' \left[\dfrac{g}{2} \right]+ \binom{f'+1}{2} g$,

 \item $(ff'+1) gg'$,
   \item $f \cdot \frac{h'}{2}$, and
 \item $f \left[\dfrac{g'}{2} \right]+ \binom{f+1}{2} g'$.

\end{enumerate}
  
 Note that if $\Pi$ is spinorial, then its restriction to $S_n \times \{ 1 \}$, which amounts to $f'$ copies of $\pi$, is spinorial.  From before, this implies that   $f' \cdot \frac{h}{2}$ is even, and
 $f'g$ is congruent to $0$ or $3 \mod 4$.  One can verify this $f'g$ condition is equivalent to (2) being even.  Thus (1),(2),(4), and (5) above are all even iff the restrictions of $\Pi$ to $S_n \times \{ 1 \}$ and $\{ 1 \} \times S_{n'}$ are spinorial.   Theorem \ref{prod.thm.intro} follows from this, since $ff'=\deg \Pi$.
   
\end{proof}

 \bibliographystyle{alpha}
 
\bibliography{mybib}

  \end{document}